\begin{document}


\theoremstyle{plain}
\newtheorem{thm}{Theorem}[section]
\newtheorem{cor}{Corollary}[section]
\newtheorem{property}{Property}[section]
\newtheorem{prop}{Proposition}[section]
\newtheorem{lemma}{Lemma}[section]
\theoremstyle{definition}
\newtheorem{exa}{Example}[section]
\newtheorem{df}{Definition}[section]
\newtheorem{rmk}{Remark}[section]
\newcommand{\Tspace}{\rule{0pt}{3ex}}
\newcommand{\Bspace}{\rule[-1.2ex]{0pt}{0pt}}
\renewcommand{\theequation}{\thesection.\arabic{equation}}
\renewcommand{\thetable}{\thesection.\arabic{table}}
\renewcommand{\thefigure}{\thesection.\arabic{figure}}
\renewcommand{\arraystretch}{1.2}
\long\def\symbolfootnote[#1]#2{\begingroup%
\def\thefootnote{\fnsymbol{footnote}}\footnote[#1]{#2}\endgroup}
\numberwithin{table}{section}
\numberwithin{figure}{section}
\renewcommand{\qed}{\begin{flushright}\vspace{-.5cm}$\Box$\end{flushright}}

\newcommand{\NorT}{\ensuremath{\mathcal{N}\mathcal{T}}}
\newcommand{\Nor}{\ensuremath{\mathcal{N}}}
\newcommand{\cov}{\mathrm{Cov}}
\newcommand{\corr}{\mathrm{Corr}}
\newcommand{\C}{\mathbb{C}}
\newcommand{\E}{\mathbb {E}}
\newcommand{\F}[1]{\mathcal{F}_{#1}}
\newcommand{\iten}[1]{\vspace{.2cm}\noindent{\bf #1}}
\newcommand{\K}{\mathcal{K}}
\renewcommand{\L}{\ell}
\newcommand{\LL}{\mathcal{B}}
\newcommand{\N}{\mathbb {N}}
\newcommand{\nn}[1]{\mbox{\boldmath{$#1$}}}
\newcommand{\noise}{$\{Z_t\}_{t \in \mathbb{Z}}{}$}
\newcommand{\prob}{\mathbb{P}}
\newcommand{\pe}{$\{X_t\}_{t \in \mathbb{Z}}{}$}
\renewcommand{\proof}{\noindent\textbf{Proof: }}
\renewcommand{\qed}{\begin{flushright}\vspace{-.5cm}$\Box$\end{flushright}}
\newcommand{\R}{\mathbb {R}}
\newcommand{\stg}{$\left\{X_{t}\right\}_{t=1}^{n}{}$}
\newcommand{\var}{\mathrm{Var}}
\newcommand{\Z}{\mathbb {Z}}
\newcommand{\mat}[1]{\mbox{\boldmath{$#1$}}}
\renewcommand{\qed}{\begin{flushright}\vspace{-.5cm}$\Box$\end{flushright}}
\newcommand{\fim}{\hfill $\Box$}
\newcommand{\stk}[1]{\stackrel{#1}{\longrightarrow}}
\newcommand{\s}{\hphantom{x}}
\newcommand{\Ind}[2]{\ensuremath{\mathbb {I}_{#1}{\mbox{\footnotesize $\left(#2\right)$}}}}
\newcommand{\B}{\mathcal{B}}
\newcommand{\si}{\mathrm{sign}} 
\thispagestyle{empty}

\vskip2cm
{\centering
\Large{\bf Decision Theory and Large Deviations for Dynamical Hypotheses Tests:
The Neyman-Pearson, Min-Max and Bayesian Tests}\\
}
\vspace{1.0cm}
\centerline{\large{\bf{Hermes H. Ferreira, Artur O. Lopes  and S\'ilvia R.C. Lopes}}}

\vspace{0.5cm}

\centerline{Mathematics and Statistics Institute}
\centerline{Federal University of Rio Grande do Sul}
\centerline{Porto Alegre, RS, Brazil}

\vspace{0.5cm}

\centerline{\today}

\begin{abstract}
\noindent

\noindent We analyze hypotheses tests using classical results on large deviations to compare two models, each one described by a different H\"older Gibbs probability measure. One main difference to the classical hypothesis tests in Decision Theory is that here the two measures are singular with respect to each other. Among other objectives, we are interested in the decay rate of the wrong decisions probability, when the sample size $n$ goes to infinity.  We show a dynamical version of the Neyman-Pearson Lemma displaying the ideal test within a certain class of similar tests. This test becomes exponentially better, compared to other alternative tests, when the sample size goes to infinity.  We are able to present the explicit exponential decay rate. We also consider both, the Min-Max and a certain type of Bayesian hypotheses tests. We shall consider these tests in the log likelihood framework by using several tools of Thermodynamic Formalism. Versions of the Stein's Lemma and Chernoff's information are also presented.

\vspace{0.3cm}
\noindent \textbf{Keywords:} Decision Theory,  Large Deviations Properties, Rejection Region, Ney\-man-Pearson Lemma, Min-Max Hypotheses Test, Bayesian Hypotheses Test, Thermodynamic Formalism, Gibbs Probabilities.
\vspace{.2cm}
\newline
\noindent \textbf{2020 Mathematics Subject Classification}: 62C20, 62C10, 37D35.

\end{abstract}

\section{Introduction}\label{Intro}
\renewcommand{\theequation}{\thesection.\arabic{equation}}
\setcounter{equation}{0}

The problem we are interested in this work can be simply expressed as the following: there are two ergodic probability measures $\mu_0$ and $\mu_1$ that we know, in advance, what they are. A data set is obtained by ergodicity, but we do not know, in advance, if it was originated from $\mu_0$ or $\mu_1$. Suppose it comes from the random process associated with $\mu_1$. From this data set, we need to decide which one of the two processes generated this data set. A hypotheses test is a method that helps us to make such a choice. We want to design tests such that by observing the data set generated from the random process - indexed by the integers - we shall be able to make the right decision, that is, to choose the alternative $\mu_1$. In addition, we wish to make optimal choices among possible tests. Classical results on Large Deviations (LD) properties can estimate the risk that a wrong decision is made. From the Bayesian point of view, we should attach to $\mu_0$ a probability $\pi_0$ and to $\mu_1$ a probability $\pi_1$, such that $\pi_0+\pi_1=1$.

We shall consider a certain class of probabilities on the Borel sigma algebra of the symbolic space $\Omega=\{1,2,...,d\}^\mathbb{N}$.  Points in $\Omega$ will be denoted by $y=(b_0,b_1,b_2,b_3,\cdots,b_n,\cdots) $. The shift transformation $\sigma$ is given by $ \sigma(b_0,b_{1},b_2,b_3,\cdots,b_n,\cdots) = (b_1,b_2,b_3,\cdots,b_n,\cdots)$.

Our main interest lays on the decay rate of the probability of wrong decisions in terms of the sample size $n$. Using large deviation techniques we will try to determine the optimal rate for different cases.

We shall extend the reasoning described on page 91, section VI, of \cite{Buck}, where the author considers LD properties. However, we point out that, in \cite{Buck}, there is no dynamics involved in the process.

A reference for Thermodynamic Formalism is \cite{PP} (see also \cite{Denk1} and \cite{L3}).  For results on Large Deviations for Thermodynamic Formalism we refer the reader to  \cite{Kif}, \cite{L4}, \cite{L2} and  \cite{L3}. Important references for basic results in Hypotheses tests are \cite{AR}, \cite{Buck}, \cite{CLP}, \cite{Denk}, sections 3.4 and 3.5 in \cite{DZ}, \cite{KLS}  and \cite{Roh}. For additional results on the Bayesian point of view in Thermodynamic Formalism, we refer the reader to \cite{ELLM}, \cite{LLV} and \cite{Nobel1}.

References on applications of hypotheses tests to Game Theory are \cite{Barni}, \cite{Bla}.

Invariant probabilities for the shift transformation correspond to stationary processes $X_n$, for $n \in \mathbb{N}$, with values on $\{1,2,\cdots,d\}$.

Given a H\"older potential $A: \Omega \to \mathbb{R}$ {\it the pressure of $A$} is defined as
\begin{equation*}
P(A) := \sup_{\mu \,\, \emph{invariant for the shift}} \left\{ \int A d \mu + h(\mu)\right\},
\end{equation*}
where $h(\mu)$ is  \emph{the Shannon-Kolmogorov entropy} for the invariant probability measure $\mu$. The unique probability which realizes such supremum is called the H\"older equilibrium probability for the potential $A$. It's known that $P(A)$ is an analytic function on the potential $A$ (see \cite{PP}). This property is quite useful to obtain good large deviation properties (see, for instance, \cite{DZ}, \cite{Ellis}, \cite{L2} and \cite{L3}).

Consider a H\"older continuous function $\log J: \Omega\to \mathbb{R}$, where $J>0$, such that
\begin{equation} \label{kle}\sum_{a=1}^d J (a, b_{0},b_1, b_2,b_3, \cdots)=1,
\end{equation}
for all  $y=(b_0,b_{1},b_2,b_3,\cdots) \in \Omega$.

In this case, $P( \log J)=0$ and the H\"older  equilibrium probability will be called a {\it  H\"older  Gibbs equilibrium probability for} $\log J$.
For instance, when $\log J = - \log d$, the corresponding equilibrium probability will be the maximum entropy probability, which is the independent probability with weights $1/d$.

Equilibrium probabilities play a central role in several problems in Statistical Physics and Information Theory. Hypotheses tests are relevant in all these domains;
see, for instance \cite{Beno}, \cite{Sagawa}, \cite{Suhov} and \cite{von}.
 For instance,  \cite{van} describes the role of hypotheses tests in the Discovery of a Higgs Boson. The scope of our work can be eventually useful in all  these areas.

The H\"older  equilibrium probabilities $\mu$ have no atoms (no points $x\in \Omega$ such that $\mu(x)>0$).
For each H\"older Gibbs probability $\mu$, one can associate a unique H\"older continuous function $\log J: \Omega=\{1,2, \cdots,d\}^\mathbb{N}\to \mathbb{R}$ (see \cite{PP}).
We call  $J$ the \emph{Jacobian} of $\mu$. Any Jacobian function considered here is of H\"older class and from this follows that the associated probability measure $\mu$ is ergodic (see \cite{PP}).

A stationary Markov process  $X_n$, $n \in \mathbb{N}$, with state space $\{1,2,\cdots, d\}$, associated to a $d \times d$ line stochastic matrix ${\cal P}$, defines a shift invariant probability $\mu $ in $\{1,2, \cdots,d\}^\mathbb{N}.$ One can show that
the associated Jacobian function $J$ on the cylinder $\overline{i\,j}$ has the constant value $\frac{\pi_i\,p_{i j}}{\pi_j}$, where $\pi=(\pi_1,\pi_2,\cdots,\pi_d)$ is the initial stationary vector for  $\mathcal{P}$.  In the $2 \times 2$ case, we get that $\frac{\pi_i\,p_{i j}}{\pi_j}=p_{j\,i}$, for $i,j=1,2$ (see Section \ref{Example}). The Jacobian is the natural extension of the concept of stochastic matrix (see example 1, in \cite{L3} or page 27 in \cite{PP}). The references \cite{Baha}, \cite{GR} and \cite{GLR} consider statistical tests for Markov Chains but using different methodologies.

The Shannon-Kolmogorov entropy of such shift invariant probability $\mu$ is given by the formula $h(\mu)= - \int \log J\, d \mu.$

 When considering two probabilities $\mu_0$ and $\mu_1$, with respective Jacobians $J_0$ and $J_1$, it is natural to consider the log-likelihood
 \begin{equation} \label{pesc}
 \log J_0 - \log J_1.
\end{equation}
In this case, a relation that will be important in future sections (see expressions \eqref{lol} and \eqref{lol1}) is:
 \begin{equation} \label{pesc1}
- h(\mu_0 ) - \int \log J_{1\,} \,d \mu_{0} = \int (\log J_{0} - \log J_{1\,}) \,d \mu_{0} \,>0 .
\end{equation}
In the reference \cite{LLV}, such kind of loss function is also analyzed under time evolution  but from another perspective. Expression \eqref{pesc} is a particular case of a more general class of loss functions described by expression (3) in \cite{Nobel1}.

The value $\int (\log J_{0} - \log J_{1\,}) \,d \mu_{0} \,$ is also known as \emph{Kullback-Leibler divergence}. For some results on Kullback-Leibler divergence,
we refer the reader to \cite{Cha}, \cite{GR}, \cite{LM}, \cite{LR} and \cite{Sagawa}.

Two  different H\"older Gibbs probabilities are  singular to each other (see \cite{PP} or appendix 7.3 in \cite{L3}). We point out that, in most of the cases in the classical Hypotheses tests setting, it is considered families of probabilities which are {\bf absolutely continuous} with respect to each other. The set of H\"older Gibbs probabilities is dense (in the weak$*$ topology) in the set of shift  invariant probabilities (see, for instance, \cite{L2}). Therefore, we are considering here a larger class of  possible pairs.  It is the information of the Jacobians that allows us to compare two probabilities which are   singular with respect to each other.

Here we will just consider probabilities $\mu$ on $\Omega$ of H\"older Gibbs type.
The associated stochastic process $\{X_n\}_{n \in \N}$, taking values on $\{1,2,\cdots,d\}$, is described by
\begin{equation}\label{cyll}
\prob(X_{0}=a_{0},X_2=a_2,\cdots,X_ n=a_n) = \mu (\overline{a_{0},a_2,\cdots,a_n}),
\end{equation}
where $\overline{a_{0},a_2,\cdots,a_n}\subset \Omega$ is a general cylinder set.

Our tests are based on estimations of Birkhoff  sums, $y \in \Omega$,
\begin{equation} \label{smart}
\frac{1}{n} \sum_{i=0}^{n-1} \log\left(\frac{J_{0}(\sigma^i (y))}{J_{1\,}(\sigma^i (y))} \right),
\end{equation}
that is, via the Jacobians of $\mu_0$ and $\mu_1$, and rejection regions given by \eqref{yokon} and \eqref{yuo}.
We say the sample size goes to infinity when we take the limit $n\to \infty$ in expression  \eqref{smart}. We point out that the terminology {\it sampling} in  generally refers to the case where the sample is obtained via an independent process (which is not the case here).

  Some of the results  in \cite{LLV} for Bayes posterior convergence are based on the measure of cylinder sets.  Here the dynamical tests are based on the Jacobians.

The paper is organized as follows: in Section \ref{HypoTest} we present the basic idea of two simple hypotheses test in the thermodynamical formalism sense, where the definition of the \emph{type I} and \emph{type II errors} are stated. In Section \ref{LDP}, Large Deviation properties and some basic results are presented. The Neyman-Pearson Lemma
is the  main result in Section \ref{NP}. The Min-Max hypotheses test
is presented in Section \ref{MinMax}, while the Bayesian hypotheses test is in Section \ref{Bayes}. Finally, Section \ref{Example} presents an example based on the Min-Max hypotheses test.

We point out that when showing  optimality  in Section \ref{NP} (the claim of Theorem A), when $n \to \infty$,  for our version of the dynamical Neyman-Pearson Lemma, we compare a test with others in the same class
(and not with respect to the universe of all possible tests). However, we will be able to present the explicit coefficient of  exponential  decay to zero of wrong decisions, when $n \to \infty$, in terms of the
Legendre transform of the Pressure function.

\section{Preliminaries on Hypotheses Tests}\label{HypoTest}
\renewcommand{\theequation}{\thesection.\arabic{equation}}
\setcounter{equation}{0}

In this section, we set the preliminaries and basics concepts for introducing simple hypotheses test in the setting of thermodynamic formalism.

For each  stochastic process  $\{X_n\}_{n \in \N}$ we  consider the associated  probability on the symbolic space $\Omega=\{1,\cdots,d\}^\N$. We can test two simple hypotheses in the following way:
\vspace{0.1cm}
\begin{enumerate}
\item[] \textbf{$H_0$:} $\{X_n\}_{n \in \N}$ is described by $\mu_0$ with Jacobian $J_0$
\item[] \textbf{$H_1$:} $\{X_n\}_{n \in \N}$ is described  by $\mu_{1}$ with Jacobian $J_1$.
\end{enumerate}

\vspace{0.2cm}

The two measures $\mu_0$ and $\mu_1$ considered here are H\"older Gibbs probabilities and, therefore, are singular with respect to each other. As far as the authors know, this
type of tests for a pair of singular probabilities was not considered before in the literature.

We want to decide which one of the two hypotheses is true from a data set of the form
$y_i= \sigma^i (y_{0})$, $i=0,1,2,\cdots,n-1$, where $y_{0}\in \Omega$ is chosen at random according to a given ergodic measure $\mu$. In Section \ref{NP} and Subsection \ref{Optimal}, we will choose to fix such $\mu$ as $\mu_{1}$.
We are interested in the Large Deviations properties for such types of tests.
The information of the probabilities $\mu_0$ and $\mu_1$ obtained  from the respective Jacobians $J_0$ and $J_1$ allows us to consider tests of the log-likelihood type. More precisely, it will be natural to consider loss functions of the form $\log J_0 - \log J_1$, under the time evolution of certain stochastic processes.

One can announce $H_1$ when, in fact, $H_0$ is true. This is called the \emph{false alarm} or  \emph{type I error}. The probability of false alarm is usually denoted by $\alpha$, which is called \emph{the test size}. Therefore, the value $\alpha$ denotes
 $ \alpha:=\prob(\mbox{Decide}\, H_1\, | H_0 \, \mbox{is true})$. We choose
$\alpha$ such that $0<\alpha<1$.

On the other hand, one can announce $H_0$ when, in fact, $H_1$ is true. This is called a \emph{misspecification} or  \emph{type II error}. The  probability of misspecification is usually denoted by $1-\beta$. The \emph{detection rate} is the value $\beta \in (0,1)$, which describes the probability $\beta:=\prob(\mbox{Decide}\, H_1 \,\,| \,\,  H_1 \, \mbox{is true})$. The value $\beta$ is called the \emph{power of the test}. In general, one hopefully would like  to fix a value of $\beta$ close to $1$.

We do not know in advance which hypothesis $H_0$ or $H_1$ is more likely to happen (at least for non-Bayesian tests).

In Section \ref{NP}  we are interested in an optimal procedure which can be described schematically in the following way:  we  consider a class of possible tests and one of them will be fixed for comparison. The other tests will be called alternative tests. We shall investigate which one is the test that minimizes the total error probability, that is, we would like to maximize the power of the test, among all tests that have a certain test size (in the sense of appendix B1, on page 611, in \cite{von}).  This result can be seen as a dynamical version of  the  Neyman-Pearson Lemma.

We point out that the Bayesian point of view  will be explored in Section \ref{Bayes}.

The \emph{test statistics} we consider will be associated to the time series  $y, \sigma (y),\cdots, \sigma^n (y)$ and to the log-likelihood form given by

\begin{equation} \label{sma}
S_n (y):=\frac{1}{n} \sum_{i=0}^{n-1} \log\left(\frac{J_{0}(\sigma^i (y))}{J_{1\,}(\sigma^i (y))} \right).
\end{equation}
A similar log-likelihood  test was considered in section VI of \cite{Buck}, but no dynamics. Here $y\in \Omega$ will be taken in a Birkhoff's set for an ergodic probability (not sampling).

\begin{rmk} \label{rrtu}
Given the probabilities $\mu_0$ (associated to $H_0$) and  $\mu_1$ (associated to $H_1$), as $\int   (\log J_{0 } - \log J_{1\,}) \,d \mu_{1} \,<0$, then,
for $\mu_1$ almost every $x$, we get that $S_n$ will become negative.
On the other hand, as $\int   (\log J_{0 } - \log J_{1\,}) \,d \mu_{0} \,>0$,
then, for $\mu_0$ almost every $x$, $S_n$ will become positive.  This leads us to the  study and design of tests via rejection regions.
\end{rmk}
We shall introduce a sequence $u_n$, $n \in \N$, which will be necessary for the test.
The rejection region $\mathcal{R}_n$ is defined as
\begin{equation} \label{yokon}
\mathcal{R}_n:=\{x\in \Omega \,|\, S_n< u_n\}, \ \ n \in \mathbb{N}.
\end{equation}
We assume that the sequence $u_n$ has a limit:
\begin{equation} \label{yuo}
\lim_{n \to \infty} u_n = E.
\end{equation}

Without loss of generality, we  assume that the sequences $u_n$, $n \in \mathbb{N}$,
are monotonous. The values of $E$ will be taken in an open interval which will be described later by Remark \ref{supre}. In the dynamic sense, the important quantity is the limit value $E$ and not the specific values $u_n$.

Given a sample of size $n$, if $S_n<u_n$, we announce $H_{1}$, when $H_0$ is true, and if $S_n>u_n$ we announce $H_{0}$, when $H_1$ is true. This is the interpretation of wrong decisions. In all tests considered here, the main point is to find the optimal choice of $E$ (which is  the limit of $u_n$,  $n \in \mathbb{N}$) and its relationship with the asymptotic
values of
\begin{equation} \label{jjj}
\mu_{1}\left( S_n\geq  u_n \right)\ \ \mbox{ or }\ \ \mu_{0}\left( S_n\leq  u_n \right),
\end{equation}
for large $n$.

For all tests, we shall consider large data sets and we are interested in minimizing the exponential rate of the probability of a wrong decision. In this direction, it will be necessary to study Large Deviations properties first. The results on Large Deviations, that we need here, shall be presented in Section \ref{LDP}.

From an experimental point of view, the confidence in the information $S_n (y)> u_n$, given by  a Birkhoff's time series  of large size $n$ obtained from the ergodicity of  $\mu_0$,  is related to the asymptotic velocity of the sequence  $\mu_{0}\left( S_n\leq  u_n \right)$ going to zero, when $n$ goes to infinity.

For the Neyman-Pearson Lemma (see Section \ref{NP} and Subsection \ref {Optimal}),
the large samples will be taken according to $\mu_{1}$ and we want to estimate how small is the probability $1 -\beta_n$ of announcing $H_0$ when $H_1$ is true.
In this case, we shall consider samples of the process $S_n$, for $n \in \N$, which will be produced with the random choice given by $\mu_{1}$ and not by $\mu_{0}$.

We denote by $\beta_n := \mu_1 ({\cal R}_n)$  \emph{the power of the test} at time $n$.
We want to analyze the \emph{misspecification probability}, or the \emph{type II error}, which will be denoted by
\begin{equation} \label{bbr}
1-\beta_n= \mu_1( \Omega  -{\cal R}_n)= 1 -\mu_1({\cal R}_n)= \mu_1( S_n > u_n),
\end{equation}
from samples of size $n$.

The asymptotic values of the probabilities of $\mu_1(S_n>u_n)$ and  $\mu_0(S_n\leq u_n)$,
for $n \in \mathbb{N}$, are the essential information we shall consider. The main issue here is: $\mu_1 \{x\,|\, S_n>  u_n\}$ is associated with a wrong decision by announcing
$H_0$ when $H_1$ is true. On the other hand $\mu_0
\{x\,|\, S_n\leq  u_n\}$ is associated with another type of wrong decision by
announcing $H_1$ when $H_0$ is true.

Alternative tests $A$ to the $NP$ test are associated to sequences $\tilde{u}_n$, $n \in \mathbb{N}$, and rejection regions
\begin{equation} \label{yoko567}
\mathcal{R}_n^A:=\{x\in \Omega \,|\, S_n< \tilde{u}_n\}, \ \ n \in \mathbb{N}.
\end{equation}

Given a probability $\mu_1$ and a test $A$ associated to a sequence
$\tilde{u}_n$ and $E\leq 0$, in the case there exists $c>0$ such that
$$
\lim_{n \to \infty} \frac{1}{n} \log (\,\mu_{1} ( \Omega - \mathcal{R}_n^A) )=\lim_{n \to \infty} \frac{1}{n} \log (\,\mu_{1} ( S_n> \tilde{u}_n)\,) <  c<0,
$$
then the test works fine (see Remark \ref{rrtu}). The value $c$ regulates the speed of exponential convergence to $0$. This leads us to the design of optimal tests among the possible $A$ tests.

The main result  of Section \ref{NP}, related to the maximization of the test power, is given by Theorem $4.1$. It is a dynamical version of the Neyman-Pearson Lemma. We state this theorem below.

{\bf In order to simplify the notation we will consider the specific case
where
\begin{equation} \label{yuo32}
\lim_{n \to \infty} u_n = E^{NP}=  \int \log J_0 \,d \mu_{0} \,-   \int \log\,J_1\,\mathrm{d}\mu_{0}.
\end{equation}
}

\vspace{0.35cm}

\noindent {\bf Theorem A.} \label{aa} \emph{Consider the test when}
\begin{equation*}
u_n \to E^{NP}=  \int \log J_0 \,d \mu_{0} \,-   \int \log\,J_1\,\mathrm{d}\mu_{0}.
\end{equation*}
\emph{ When comparing test for the above value of $E^{NP}$,  with the  tests for other possible values of  $E$, the one associated to $E^{NP}$ minimizes the type II error.}

\
\emph{ The above means that for any other alternative test $A$, that is, for any  sequence $\tilde{u}_n$, converging to a certain value $E$, we get}
$$
\lim_{n \to \infty} \frac{1}{n} \log (\,\mu_{1} ( S_n> \tilde{u}_n)\,) >  \lim_{n \to \infty} \frac{1}{n} \log (\,\mu_{1} ( S_n> u_n)\,)
$$
$$
=\,-\left(\int \log J_0 \,d \mu_{0} \,-   \int \log \,J_1\,\,d\mu_0\,\right)\, =\,-  E^{NP},
$$
\emph{for tests $A$ satisfying the following condition concerning  the test size:}
$$\mu_{0}\left(S_n<\tilde{u}_n \right)\leq \mu_{0}\left(S_n<u_n \right).$$

\smallskip

 One can show that the test associated with $E{NP}$ is exponentially better than the others, when $n\to \infty$, via the explicit decay rate  expressions \eqref{orty} and  \eqref{expp}. One could also fix another value $\overline{E}$ and get  similar optimality results when comparing with other test for different values of $E\neq \overline{E}$.

The optimality of the claim  of Theorem A can be expressed by saying that
$$
\lim_{n \to \infty} \frac{1}{n} \log (\,\mu_{1} (\mathcal{R}_n^A)\,) >  \lim_{n \to \infty} \frac{1}{n} \log (\,\mu_{1} (\mathcal{R}_n^{NP})),
$$
where $\mathcal{R}_n^{NP}$ is the rejection region for $u_n \to E^{NP}$.
\smallskip

\smallskip

In the other two tests (see Sections \ref{MinMax} and  \ref{Bayes}) we shall also consider large samples $S_n$, but we have to compare the corresponding asymptotic laws according to  $\mu_1$ and also $\mu_0$, in terms of the expression \eqref{jjj}.

Section \ref{MinMax} shall consider loss functions and the Min-Max hypotheses test.
In that case,  it will be natural to consider the \emph{pressure} as a function of a real parameter $t\in \mathbb{R}$, more precisely, we shall need the function $P_1$ given  by
$$
t\to P_1(t) := P(\,t\,( \log J_0- \log J_1) + \log J_1).
$$

The main result in Section \ref{MinMax} is Theorem \ref{eer} that we state here:

\vspace{0.35cm}

\noindent {\bf Theorem B.} \emph{In the Min-Max hypotheses test, the best choice of $E$ will be $E=0$. Moreover, the best decay rate for minimizing the probability of wrong decisions is
given by $e^{n \,r}$, where $r<0$ is the minimum value of the pressure function $P_1$. The value of $r$ is given by $r=h(\mu)+ \int \log J_1 d \mu$, where $\mu$ is a Gibbs probability measure satisfying $\int (\log J_0 - \log J_1) d \mu=0.$}

\vspace{0.35cm}

In Section \ref{Bayes} a certain type of Bayesian hypotheses test will be studied. Hypothesis $H_0$ will have probability $\pi_0$ and hypothesis $H_1$ will have probability $\pi_1$, where $\pi_0+ \pi_1=1$. In this section, we shall consider rejections regions of the form
\begin{equation} \label{yokono}
\mathcal{R}_{n,\lambda}:=\left \{x\in \Omega \,\bigg|\, \frac{1}{n} \sum_{i=0}^{n-1} \log J_{\lambda}(\sigma^i (y))   < u_n\right\},\ \ \mbox{for} \ \ n \in \mathbb{N},
\end{equation}
where

\begin{equation} \label{sac0}
J_\lambda := \lambda J_ 1 + (1-\lambda) J_0, \ \ \mbox{for}\ \ \lambda\in [0,1].
\end{equation}

We shall estimate   $\pi_1\,\mu_1 (S_n > u_n)$ and  $\pi_0 \,\mu_0 (S_n \leq u_n)$,
for $n \in \mathbb{N}$. For this test, we shall exhibit the best value of $E_\lambda$ that minimizes the probability of a wrong decision (see expressions \eqref{cross}, \eqref{relentropy} and \eqref{xcafe4}), for each $\lambda$. We shall also find the best possible $E_\lambda$ value, producing the best decay rate, among all possible values of $\lambda$.

We will show  a version of Chernoff's information in Section \ref{Bayes}.

\section{Preliminaries on Large Deviations Properties} \label{LDP}

In this section, we shall present the specific Large Deviations properties which will be necessary for the proof of our main results in Sections \ref{NP}, \ref{MinMax}, and \ref{Bayes}. First, we briefly mention some basic results on  the topic of  large deviations  in Themodynamic Formalism. Denote by $\mu$ a H\"older Gibbs probability on $\Omega$  and consider a H\"older function $\varphi:\Omega \to \mathbb{R}$. The Birkhoff's Theorem claims that,  for $\mu$-almost every point $y \in \Omega$,
we get that $\lim_{n\to \infty} \frac{1}{n} \sum_{i=0}^{n-1} \varphi (\sigma^i (y))= \int \varphi  d\mu$. We are interested in the estimation of  the deviations of $\frac{1}{n} \sum_{i=0}^{n-1} \varphi (\sigma^i (y))$ (the times series mean value) from the limit $\int \varphi  d\mu$. The large deviation function $I:\mathbb{R} \to [0,\infty)$ will help in
this direction. The function $I$ will be analytic and shall take the zero value only in the point $\int \varphi d \mu$.  The function $I$ is the Legendre transform of the analytic strictly convex  function
\begin{equation}\label{phili}
t \to c(t):= \lim_{n \to \infty} \frac{1}{n}\log\int  e^{t\,\, \sum_{i=0}^{n-1}   \varphi (\sigma^i (y)) } d \mu (y).
\end{equation}

Under the above hypotheses the following  large deviation results are true (see, for instance, \cite{Kif}, \cite{L4} or \cite{L3} for more details):

\begin{enumerate}
\item given an open  interval $(a,b)\subset \mathbb{R}$,
\begin{equation}\label{tuty1}
\lim_{n \to \infty} \frac{1}{n} \log \mu \left\{\, y\in \Omega \,|\,  \frac{1}{n} \sum_{i=0}^{n-1} \varphi (\sigma^i (y))\in (a,b)\,\right\} \geq  -\, \inf \{ I(z) \,|\, z \in (a,b)\}.
\end{equation}

\item given a closed  interval $[a,b]\subset \mathbb{R}$,
\begin{equation}\label{tuty2}
\lim_{n \to \infty} \frac{1}{n} \log \mu \left\{\,  y\in \Omega \,|\,  \frac{1}{n} \sum_{i=0}^{n-1} \varphi (\sigma^i (y))\in [a,b]\,\right\} \leq  -\, \inf \{ I(z) \,|\, z \in [a,b]\}.
\end{equation}
\end{enumerate}

The function $I$ depends on $\mu$ and on $\varphi$. The probability of the set of points $x$, such that   $\frac{1}{n} \sum_{i=0}^{n-1} \varphi (\sigma^i (y)) $ deviates $\epsilon>0$ from the mean value $\int \varphi d \mu$, converges exponentially to zero, since
$$
\lim_{n \to \infty} \frac{1}{n} \log \mu \left\{\,  y\in \Omega \,|\,  \frac{1}{n} \sum_{i=0}^{n-1} \varphi (\sigma^i (y))\in (-\infty, -\epsilon]\,\cup [\epsilon,\infty) \right\} =$$
\begin{equation}\label{tutyte}
= -\, \inf \{ I(z) \,|\, z \in (-\infty, -\epsilon]\,\cup
[\epsilon,\infty)\}<0.
\end{equation}

\begin{enumerate}
\item Given the sequence $a_n$ and $c\in \mathbb{R}$, we say that $a_n \sim e^{ -n\, c}$ if
$$
\lim_{n \to \infty} \frac{1}{n} \log a_n =- c.
$$

\item{ }
The value $-c$ regulates the speed of the exponential convergence to zero
\end{enumerate}
In order to capture the meaning and usefulness of the  above bounds in expressions \eqref{tuty1} and \eqref{tuty2}, we can say, in a simplified way, that
\begin{equation}\label{estimatetu}
\mu \left\{\, y\in \Omega \,|\,  \frac{1}{n} \sum_{i=0}^{n-1} \varphi (\sigma^i (y))\in (a,b)\, \right\} \sim e^{ - \,n\,  \inf \{ I(z) \,|\, z \in (a, b) \}}.
\end{equation}

It will not make much difference in our arguments if the intervals are of the form $(a,b)$ or $[a,b]$. If $\int \varphi  d\mu$ is not in $[a,b]$ then \eqref{estimatetu} converges exponentially  to zero with $n$.

%

For the study of large deviations on the topic of log-likelihood tests, we shall be  interested in estimating
\begin{equation}\label{estimate}
\mu_{1}\left( S_n> u_n \right)= \mu_{1}\left( S_n - u_n >0 \right) \ \ \mbox{and} \ \
\mu_{0}\left( S_n\leq u_n \right)= \mu_{0}\left( S_n - u_n \leq 0 \right),
\end{equation}
where $S_n$ is defined by \eqref{sma} and $u_n \to E$, when $n$ goes to infinity.
We are interested in Large Deviations for $S_n- u_n$; that is, given an interval $(a,b) \subset \R$, we want to estimate $\mu_j \{ (S_n -u_n) \in (a,b)\}$, for
$j=0,1$. Intervals of the type $(-\infty,0)$ and $(0,\infty)$ are particularly
important.

It is a classical result (see, for instance, \cite{PP}), that
\begin{equation} \label{lol}
\int (\log J_{0} - \log J_{1\,}) \,d \mu_{0} \,>0 \ \ \mbox{and} \ \
\int (\log J_{1} - \log J_{0\,}) d \mu_{1}>0.
\end{equation}
We need to estimate for $j=0,1$.
\begin{equation} \label{lol1}
\mu_{j} ( S_n - u_n \in (a,b))\,=\mathbb{P}_{\mu_{j}}  \left(x\,|\, \frac{1}{n} \sum_{i=0}^{n-1}  \left[\log\left(\frac{J_{0}(\sigma^i (y))}{J_{1\,}(\sigma^i (y))} \right)- u_n \right]\in (a,b)  \right),
\end{equation}

To get the correct large deviation rate, we need first to analyze the expression
\begin{equation}\label{phi}
\phi_n^j(t):=\frac{1}{n}\log\mathbb{E}_{\mu_{j}} \left \{ \exp \left[t\,\, \sum_{i=0}^{n-1}  \left(\log\left(\frac{J_{0}(\sigma^i )}{J_{1\,}(\sigma^i )} \right)- u_n \right) \right]\ \right\},
\end{equation}
for each $n$ and each real value $t$, where $\mathbb{E}_{\mu_{j}}$ denotes the expected value with respect to the probability $\mu_{j}$, for $j=0,1$.
Expression \eqref{phi} is equivalent to
\begin{equation}\label{phi1}
\phi_n^j(t)=\frac{1}{n}\log\left( \int e^{\,  t \sum_{i=1}^{n}(\,\log J_{0}- \log J_{1\,}\,)(\sigma^i (y)) }\mathrm{d} \mu_{j}(x) \right) - t  \,u_n,
\end{equation}
for $j=0,1$.

It is known (see proposition 3.2 in \cite{Kif}, \cite{L4}, theorem 3 in \cite{L2} or \cite{L3}) that
\begin{equation}\label{phi2}
\lim_{n \to \infty}   \frac{1}{n}\log\left( \int e^{\,  t \sum_{i=1}^{n}(\,\log J_{0}- \log J_{1\,}\,)(\sigma^i (y)) }\mathrm{d} \mu_{j}(x) \right)= P( t (\log J_{0} - \log J_{1\,}) + \log J_{j\,}).
\end{equation}

Hence, from the expressions \eqref{phi}, \eqref{phi1} and \eqref{phi2}, one has for $j=0,1$.
\begin{equation}\label{phi3}
\phi^j (t) :=  \lim_{n \to \infty} \phi_n^j(t) =P( t (\log J_{0} - \log J_{1\,}) + \log J_{j\,}) - t \, E,
\end{equation}

Denote by $P_j $ the function
\begin{equation} \label{bourb}
t \to P_j (t):=P( t (\log J_{0} - \log J_{1\,}) + \log J_{j\,}),
\end{equation}
for $j=0,1$. The function $t \to P_j(t)$ is analytic and strictly convex, for $j=0,1$.
Figure \ref{fig:my_label11} shows the graphs of $P_0$ (in solid line) and $P_1$ (in dashed line), for the example in Section \ref{Example}.

One can easily show that, for any $t \in \mathbb{R}$,
\begin{equation} \label{lui}
P_1(t)= P_0(t-1).
\end{equation}

The function
\begin{equation*}
t \to P( t (\log J_{0} - \log J_{1\,}) + \log J_{j\,}) - t E=P_j(t) - t\,E
\end{equation*}
is also convex, for $j=0,1$.

\begin{figure}[h] 	
	\centering
	\includegraphics[scale=0.42]{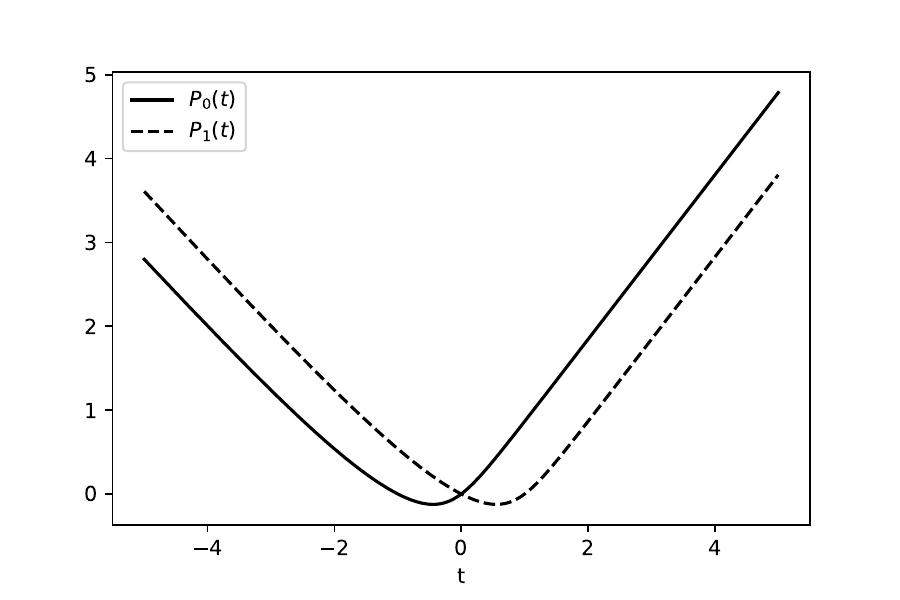}
	\caption{Graphs of $P_0$ (in solid line) and $P_1$ (in dashed line) for the functions defined in \eqref{bourb}. For these plots we consider the data from the example in Section \ref{Example}.}
\label{fig:my_label11}
\end{figure}

Moreover,
\begin{equation}\label{okco}
P_j(0)=P( 0\, (\log J_{0} - \log J_{1\,}) + \log J_{j\,})\,=\,P( \log J_{j\,}) \,=\,0,
\end{equation}
for $j=0,1$. From chapter 4 in \cite{PP}, note that
\begin{equation}\label{derivative}
\frac{d}{dt}P_j(t)|_{t=0} = \frac{d}{dt}\left(\, P( t (\log J_{0} - \log J_{1\,}) + \log J_{j\,}\right)|_{t=0} = \int (\log J_{0} - \log J_{1\,}) d \mu_{j},
\end{equation}
for $j=0,1$.

Then, $\frac{d}{dt} P_0 (t)|_{t=0} >0$ and  $  \frac{d}{dt} P_1 (t) |_{t=0}<0$, if $\mu_1 \neq \mu_0$. Besides,
\begin{equation}\label{derivative1}
\frac{d}{dt}\left(\, P( t (\log J_{0} - \log J_{1\,}) + \log J_{j\,}\right)|_{t}= \int (\log J_{0} - \log J_{1\,}) d \mu_{t}^j ,
\end{equation}
where $\mu_{t}^j$ is the equilibrium probability for $t (\log J_{0} - \log J_{1\,}) + \log J_{j\,}$ (see \cite{PP}).

There exist values $c^{+} > 0> c^{-}$ defined by
\begin{equation*}
c^{-} := \inf_{t \in \mathbb{R}} P_1^{\prime}(t)\ \ \mbox{and} \ \
c^{+} : = \sup_{t \in \mathbb{R}} P_1^{\prime}(t).
\end{equation*}
From expression \eqref{lui} we  also have
\begin{equation*}
c^{-} := \inf_{t \in \mathbb{R}} P_0^{\prime}(t)\ \ \mbox{and} \ \
c^{+} := \sup_{t \in \mathbb{R}} P_0^{\prime}(t).
\end{equation*}

The main interest here is the following: for each value $E$, where $u_n \to E$, one wants
to estimate the asymptotic values of $\mu_{1}\left( S_n - u_n >0 \right) $ and $\mu_{0}\left( S_n - u_n \leq 0 \right) $. The following proposition states the exact values for the deviation function (see \cite{Buck}, \cite{Kif}, \cite{L4} or \cite{L2}).
Before addressing this issue, one observes the following two points:

\begin{itemize}
\item{} From expression \eqref{lui}, it holds
\begin{align}\label{item1}
\nonumber
\frac{d}{d t}P_1(t)|_{t=1} &=\frac{d}{d t}(\, P(t (\log J_{0} - \log J_{1\,}) + \log J_{1\,})|_{t=1}\\
&= \frac{d}{d t} P_0(t)|_{t=0} = \int (\log J_{0} - \log J_{1\,}) \,d \mu_{0} \,>0. \end{align}
\item{} It is also true that
\begin{equation}\label{item2}
\frac{d}{d t} P_1(t)|_{t=0}= \frac{d}{d t} P_0(t)|_{t=-1}.
\end{equation}
\end{itemize}

\begin{rmk} \label{supre}The values of $E$ we shall consider are the ones such that $c^{-} < E < c^{+}$.
\end{rmk}

\begin{prop} For a fixed value $E$, it is true that
\begin{itemize}
\item[(i)] If $E <  \int (\log J_{0} - \log J_{1\,}) d \mu_{1}$,
then
\begin{equation} \label{klm1}
 \lim_{ n \to \infty}\mu_{1}\left( S_n - u_n >0 \right)=1.
\end{equation}
\item[(ii)] If $E>  \int (\log J_{0} - \log J_{1\,}) d \mu_{0}$,
then
\begin{equation} \label{klm2}
 \lim_{ n \to \infty}\mu_{0}\left( S_n - u_n \leq 0 \right)=1.
\end{equation}
\end{itemize}
\end{prop}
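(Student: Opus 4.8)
The plan is to observe that each statement is, at bottom, a law of large numbers: the claim is only that a probability converges to $1$ (no exponential rate is asserted), so the full Large Deviations machinery is not needed and the Birkhoff ergodic theorem suffices. Write $g = \log J_0 - \log J_1$, which is H\"older, hence continuous and bounded on the compact space $\Omega$, so integrable against both $\mu_0$ and $\mu_1$. With this notation $S_n(x) = \frac{1}{n}\sum_{i=0}^{n-1} g(\sigma^i(x))$ is exactly the Birkhoff average of $g$.

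First I would invoke ergodicity: as recalled in the Preliminaries, a H\"older Gibbs probability is ergodic for $\sigma$ when $\log J$ is H\"older, so both $\mu_0$ and $\mu_1$ are ergodic. By the Birkhoff ergodic theorem, for $j=0,1$ one has $S_n \to \ell_j := \int g\, d\mu_j$ $\mu_j$-almost surely, hence in $\mu_j$-probability. The two relevant means are $\ell_1 = \int (\log J_0 - \log J_1)\, d\mu_1 < 0$ and $\ell_0 = \int (\log J_0 - \log J_1)\, d\mu_0 > 0$, the signs being the classical fact quoted just before the statement.

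For part (i), assume $E < \ell_1$ and fix $\epsilon > 0$ with $E + 2\epsilon < \ell_1$. Since $u_n \to E$, there is $N$ with $u_n < E + \epsilon < \ell_1 - \epsilon$ for all $n \geq N$; hence $\{S_n > \ell_1 - \epsilon\} \subseteq \{S_n > u_n\}$ and
\[
\mu_1(S_n - u_n > 0) \;\geq\; \mu_1(S_n > \ell_1 - \epsilon) \;\geq\; \mu_1(|S_n - \ell_1| < \epsilon) \longrightarrow 1 ,
\]
the last limit being convergence in $\mu_1$-probability. Part (ii) is the mirror image: assuming $E > \ell_0$, fix $\epsilon > 0$ with $\ell_0 < E - 2\epsilon$, use $u_n > E - \epsilon > \ell_0 + \epsilon$ for $n$ large so that $\{S_n < \ell_0 + \epsilon\} \subseteq \{S_n \leq u_n\}$, and conclude $\mu_0(S_n - u_n \leq 0) \geq \mu_0(|S_n - \ell_0| < \epsilon) \to 1$.

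There is essentially no hard step here; the only thing to be careful about is the bookkeeping that threads the fixed gap between $E$ and $\ell_j$ through the convergence $u_n \to E$ via a strict $\epsilon$-margin, and the explicit identification of the almost-sure limits with the divergences $\ell_0,\ell_1$ together with their signs. I note that a second route is available through the pressure functions $P_j$ of \eqref{bourb}: the relevant rate function is the Legendre transform of $P_j$, vanishes only at $\ell_j = P_j'(0)$, and $\{S_n \leq u_n\}$ (respectively $\{S_n > u_n\}$) becomes a deviation to the wrong side of the mean, of exponentially small probability. This alternative delivers more than is asked, namely the decay rate, at the cost of having to check that $E$ lies in the range $(c^{-}, c^{+})$ where the rate function is finite; since only convergence to $1$ is claimed, the ergodic-theorem argument above is the more economical one.
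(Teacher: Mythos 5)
Your proof is correct, but it takes a genuinely different route from the paper's. The paper proves both items entirely inside the large deviations framework it has just set up: it computes the rate function $I_j(x)=\sup_t[t(x+E)-P_j(t)]$, notes that $I_j$ vanishes exactly at $v_j=-E+\int(\log J_0-\log J_1)\,d\mu_j$, and observes that the hypothesis on $E$ places $v_1$ in $(0,\infty)$ (resp.\ $v_0$ in $(-\infty,0)$), so that $\inf_{x>0}I_1(x)=0$ while the complementary half-line carries a strictly positive rate; the convergence to $1$ is then read off from the asymptotics $\mu_j\{(S_n-u_n)\in(a,b)\}\sim e^{-n\inf_{(a,b)}I_j}$. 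You instead bypass the rate function altogether and argue via ergodicity of H\"older Gibbs measures plus the Birkhoff ergodic theorem, with an explicit $\epsilon$-margin to absorb the discrepancy between $u_n$ and its limit $E$. Your argument is more elementary and, for the bare statement ``the probability tends to $1$,'' arguably cleaner: strictly speaking, the paper's one-line justification ``since $-\inf_{x>0}I_1(x)=0$'' only says the probability does not decay exponentially, and to upgrade that to convergence to $1$ one must either invoke the positive rate of the complementary event or fall back on exactly the law-of-large-numbers fact you use. What the paper's route buys is uniformity with the rest of the proof: the same computation of $I_j$ immediately handles the complementary regimes ($v_1<0$, $v_0>0$) and yields the exponential decay rates \eqref{kyr} and \eqref{lear} that the later sections depend on, whereas your Birkhoff argument gives no rate. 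Your closing remark correctly identifies this trade-off.
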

\begin{proof} According to  \cite{DZ}, \cite{Ellis} \cite{Kif}, \cite{L4} or \cite{L2},
the {\it large deviation function}  $I_j$, for  $(S_n - u_n)$, $n \in \mathbb{N}$, and for the measure $\mu_j$, is
\begin{align}\label{sup}
\nonumber
I_j(x) &:=\sup_t\left[t x -\phi^j(t) \right]=
\sup_t\left[ t\left( x  + E\, \right)-  P( t (\log J_{0} - \log J_{1\,}) + \log J_{j\,}) \right]\\
&=\sup_t\left[ t\left( x  + E\, \right)-  P_j( t ) \right],
\end{align}
for a fixed value $E$ and $j=0,1$.

The large deviation function is analytic since it is the Legendre transform of an analytic function. That is,
\begin{equation} \label{esqu}
\mu_j \, \{\, x \in \Omega\,|\, (S_n - u_n)(x) \in (a,b)\,\}\sim e^{ -n\,  \inf_{z \in (a,b)} I_j (z)},
\end{equation}
for $j=0,1$. The function $I_j(\cdot)$ is a real analytical one.

Given $E$, take
\begin{equation}\label{vj}
x=v_j:= - E      \,+  \,\left(\int \log J_{0\,}  d \mu_{j}-\int \log J_{1} d \mu_{j}\right).
\end{equation}
By using \eqref{sup}, we get that
$$ I_j(v_j) =\sup_t\left[t( v_j+E )-P_j (t) \right]= $$
$$\sup_t\left[t\,( \int \log J_{0\,}  d \mu_{j}-\int \log J_{1} d \mu_{j})-P_j (t) \right] = 0 - P(0).$$
because by \eqref{derivative}
the supremum is attained at $t=0$. Then, by \eqref{okco} we get
\begin{equation} \label{tri1010}
I_j(v_j)=0, \ \ \mbox{for} \ \ j=0,1.
\end{equation}

Note that from the strict convexity of the pressure it follows that  $I_j$ is zero only in the point $v_j$.  Note that $I$ is strictly monotone in the intervals $(-\infty, v_j)$ and  $(v_j, \infty)$, $j=0,1$.

On the other hand, when  $c^{-}< E < c^{+}$,  from the expressions \eqref{sup} and \eqref{derivative1}, for $x=0$, we get $t^E_j$, where  $t^E_j$, by definition, is the value such that
\begin{align}\label{derivative2}
\nonumber
P_j^{\prime } (t^E_j)& =\frac{d}{dt}(\, P( t (\log J_{0} - \log J_{1\,}) + \log J_{j\,})|_{t^E_j}=E\\
&= \int (\log J_{0} - \log J_{1\,}) d \mu_{t^E_j},
\end{align}
and
\begin{align}\label{tri11}
\nonumber
I_j(0) & =t^E_j E- P( t^E_j (\log J_{0} - \log J_{1\,}) + \log J_{j\,})  = t^E_j E- P_j(t^E_j)\\
\nonumber
&= t^E_j\left( \int (\log J_{0} - \log J_{1}) d \mu_{t^E_j}\right)  - \left[\,t^E_j\left( \int (\log J_{0} - \log J_{1}) d \mu_{t^E_j}\right)\right. \\
& \hphantom{xx} \left.+ \int \log J_j d \mu_{t^E_j}+ h(\mu_{t^E_j}) \right]
= - \left[\, \int \log J_j d \mu_{t^E_j}+ h(\mu_{t^E_j}) \,\right]\,>0,
\end{align}
if $\mu_{t^E_j}\neq \mu_j.$

It follows from \eqref{lui} that $t^E_0= t^E_1-1$ and therefore, $P_0(t^E_0)=P_1(t^E_1)$. From this, follows that
\begin{equation} \label{lulu}
I_1(0)=t^E_1 E- P_1(t^E_1) \ \ \mbox{and} \ \ I_0(0)=  t^E_0 E- P_0(t^E_0) =  I_1(0) - E.
\end{equation}

\vspace{0.2cm}

\noindent \emph{Item $(i)$:} From expression \eqref{esqu}, with $(a,b)=(0,\infty)$,  if $E < \int (\log J_{0} - \log J_{1\,}) d \mu_{1}$, which is equivalent to say that  $v_1>0$, then
\begin{equation*}
 \lim_{ n \to \infty}\mu_{1}\left( S_n - u_n >0 \right)=1,
\end{equation*}
since $ -\inf_{x>0}I_1(x)=0$ (since $v_1 \in (0,\infty)$). Hence, expression \eqref{klm1} is true.

\medskip

Now, we consider the other possibility: $v_1< 0$. As $I_1(v_1)=0$, from \eqref{esqu}, with $(a,b)=(0,\infty)$, we get that $v_1$ does not belong to $(0,\infty)$, and then
\begin{equation} \label{kyr}
\lim_{n\to \infty} \frac{1}{n}  \log ( \mu_{1}\left( S_n - u_n >0 \right)\,)=\lim_{n\to \infty} \frac{1}{n}\log (1- \beta_n)= -\inf_{x>0}I_1(x)= - I_1(0)<0.
\end{equation}

\begin{equation*}
1-\beta_n=\mu_{1}\left( S_n - u_n >0 \right) \sim e^{- n\,\{\inf I_1(x)\,|\, x\geq 0\}  }=e^{ - n \,I_1(0)}\,\to 0.
\end{equation*}
This corresponds to
$$
-E + \int (\log J_{0} - \log J_{1\,}) d \mu_{1} <0.
$$

Note that if $E=0$ then, from the expression \eqref{vj}, we get $v_1=\int (\log J_{0} - \log J_{1\,}) d \mu_{1}<0$, and
\begin{equation} \label{lear}
\mu_{1}\left( S_n>0 \right) \sim e^{ - n \,I_1(0)},
\end{equation}
where $I_1(0)>0$.

\vspace{0.3cm}

\noindent \emph{Item $(ii)$:} On the other hand, if $E >  \int (\log J_{0} - \log J_{1\,}) \,d \mu_{0} \,$, that is, $v_0<0$, then {\bf as $I_0 (v_0)=0$,}
\begin{equation} \label{klm3}
 \lim_{ n \to \infty}\mu_{0}\left( S_n - u_n \leq 0 \right)=1,
\end{equation}
since $-\, \inf_{x<0 }I_0(x)=0$ (since $v_0\in (-\infty,0)$). Expression \eqref{klm3} is true by \eqref{esqu}, with $(a,b)= (-\infty, 0)$.

\medskip

If $v_0>0$, as $I_0(v_0)=0$, from \eqref{esqu}
\begin{equation*}
\lim_{ n \to \infty}\frac{1}{n}  \log (\mu_{0}\left( S_n - u_n \leq 0 \right)\,)= -\inf_{x<0}I_0(x)= - I_0(0)<0.
\end{equation*}

Then, we obtain
\begin{equation*}
\mu_{0}\left( S_n - u_n \leq 0 \right) \sim e^{- n\,\{\inf I_0(x)\,|\, x\leq 0\}} =e^{ - n \,I_0(0)} \to 0.
\end{equation*}
\end{proof}
\qed

\section{Dynamical  Hypotheses Test} \label{NP}
\renewcommand{\theequation}{\thesection.\arabic{equation}}
\setcounter{equation}{0}

In Section \ref{HypoTest} we introduced sequences $\tilde{u}_n$, $n \in \mathbb{N}$,
their corresponding limits $E$ and the rejection regions indexed by $n$, which determine, the acceptance or rejection regions for the test, for each $n \in \mathbb{N}$. Each one of these limit possible values $E$ will determine a certain test and this is the class of tests we are interested here. Each of these tests will determine a certain decay rate of decreasing the probability of wrong decisions.

In this section, we shall deal with a class of dynamical hypothesis test.
Its optimality property will be shown in Subsection \ref{Optimal}.

From the ergodicity of $\mu_{0}$, we have
\begin{equation} \label{mest}
S_n \stackrel{a.s\, (\mu_0)}{\longrightarrow}\int \log\left(\frac{J_{0}}{J_{1\,}} \right)\mathrm{d} \mu_{0}=  \int \log J_0 \,\,d \mu_{0} \, \,- \int \log\,J_1\,\mathrm{d}\mu_{0},
\end{equation}
\noindent whenever hypothesis $H_0$ is true (the  time series $S_n(y)$  are obtained from the randomness of the probability measure $\mu_{0}$).

For the $NP$ test, we choose  a sequence  $u_n$, such that
\begin{equation} \label{kli}
\lim_{n \to \infty} u_n = E^{NP}:= \int \log J_0 \,d \mu_{0} \,-   \int \log\,J_1\,\mathrm{d}\mu_{0}.
\end{equation}

This defines a certain rejection region

\begin{equation} \label{yoko}
\mathcal{R}_n^{NP}:=\{x\in \Omega \,|\, S_n< u_n\}, \ \ n \in \mathbb{N}.
\end{equation}

Each possible test, an alternative $A$ for the $NP$ test, is associated with a certain monotonous sequence $\tilde{u}_n$ which converges to a determined value $E$. This defines a certain rejection region

\begin{equation} \label{yokob}
\mathcal{R}_n^A:=\{x\in \Omega \,|\, S_n< \tilde{u}_n\}, \ \ n \in \mathbb{N}.
\end{equation}
Later we shall compare the asymptotic rate error for different choices of limits $E$, for the sequences $\tilde{u}_n$, $n \in \mathbb{N}$. This requires to estimate the asymptotic in $n$ of the probability of the rejection regions $\mathcal{R}_n^A$.

For analyzing the asymptotic values of $\mu_{0}(S_n\leq \tilde{u}_n)$, associated to the \emph{type I error}, when $n$ goes to infinity, it is necessary to assume that
\begin{equation} \label{yoko234}
\tilde{u}_n\longrightarrow   E>E^{NP}= \int \log J_0 \,d \mu_{0} \,-   \int \log\,J_1\,\mathrm{d}\mu_{0} >0.
\end{equation}

This is so because, from \eqref{mest},
$$
S_n \stackrel{a.s\, (\mu_0)}{\longrightarrow} E^{NP}=  \int \log J_0 \,\,d \mu_{0} \, \,- \int \log\,J_1\,\mathrm{d}\mu_{0},
$$
in the analysis of type I error (otherwise, $\mu_{0}(S_n<\tilde{u}_n)$ would converge to  $1$).

\medskip

Later, in Subsection \ref{Optimal}, we will show that  $E^{NP}$ is optimal when compared with the other tests (associated with the other values of $E\neq E^{NP}$).  This optimality will be considered under  certain conditions on the sequences $u_n$ and $\tilde{u}_n$:  for each $n \in \mathbb{N}$, the alternative hypotheses test $A$ has a smaller or equal false alarm probability than the $NP$ test.

The \emph{probability of misspecification} is given by $\prob(\mbox{Decide} H_0\, |\, H_1 \, \mbox{is true})=1-\beta$. We are interested in minimizing the misspecification probability.
We shall consider large samples of size $n$ and shall apply classical results on large deviations theory. In the Neyman-Pearson  Lemma we shall consider the \emph{rejection region} given by \eqref{yoko} under the assumption that \eqref{kli} is true. Among other things, for the $NP$  test, we want to estimate the asymptotic misspecification probability $1-\beta_n,$
$n\in \mathbb{N}$. From an experimental point of view, the  relevance of the information of a  time series of size $n$ obtained from the randomness of the  measure $\mu_1$  is related to this asymptotic value.

As mentioned before
\begin{equation*}
1-\beta_n=\mu_{1}\left( S_n\geq  u_n \right)= \mu_{1}\left( S_n - u_n \geq 0 \right).
\end{equation*}

From expression \eqref{vj}, when $E=E^{NP}= \int \log J_0 \,d \mu_{0} \,-   \int \log\,J_1\,\mathrm{d}\mu_{0}$ (see expression \eqref{kli}),
we get
\begin{align}\label{cine}
\nonumber
v_1 = &- E^{NP} \,+  \,\left(\int \log J_{0\,}  d \mu_{1}-\int \log J_{1} d \mu_{1}\right)
= - \left( \int \log J_0 \,d \mu_{0} \,-   \int \log\,J_1\,\mathrm{d}\mu_{0}  \right ) \\
&+  \,\left(\int \log J_{0\,}  d \mu_{1}-\int \log J_{1} d \mu_{1}\right)<0.
\end{align}

Then, $v_1<0$ and, from expressions \eqref{tri1010}  and  \eqref{kyr}
\begin{equation} \label{tri101}
I_1(v_1)=0\,\,\text{and} \,\,\,- I_1(0) <0.
\end{equation}

When $ E^{NP}= \int \log J_0 \,d \mu_{0} \,-   \int \log\,J_1\,\mathrm{d}\mu_{0}$, we get from \eqref{sup} and \eqref{item1}, that $t^{E^{NP}}_1=1$. From expressions \eqref{kyr}, \eqref{sup},  \eqref{esqu}, \eqref{tri1010}  and \eqref{tri11} we get
\begin{align}\label{limit2}
\nonumber
\lim_{n\to \infty} \frac{1}{n}  \log ( \mu_{1}\left( S_n - u_n >0 \right)\,)
&= - I_1(0)\\
& = -E^{NP} = - \left( \int \log J_0 \,\,d \mu_{0}
 - \int \log \,J_1\,  \,\,d \mu_{0} \, \,\right)<0.
\end{align}
Expression \eqref{limit2} shows that our class of dynamical hypotheses test performs well,
since the probability of misspecification, $1-\beta_n$ goes exponentially fast to $0$.
The above expression can be seen as a version of Stein's Lemma (see, for instance, \cite{Cover}).

Note that if $-E<0$, then a similar inequality as \eqref{cine} is true, and from this  follows that
$- I_1(0) <0.$ Therefore,
\begin{equation}\label{limit21}
\lim_{n\to \infty} \frac{1}{n}  \log ( \mu_{1}\left( S_n - \tilde{u}_n >0 \right)\,)
= - I_1(0)  <0.
\end{equation}

In this case, it is also true that the misspecification probability goes exponentially fast to $0$.

\subsection{The  Optimality Property}\label{Optimal}

{\bf In this section, we shall prove the optimality property associated with the value $E^{NP}=\int \log J_0 \,d \mu_{0} \,-   \int \log\,J_1\,\mathrm{d}\mu_{0}$}. We are interested in   maximizing the test power (which is equivalent to minimizing the type II error probability $1- \beta_n$, $n \in \mathbb{N}$).
We will show that  there is no other alternative hypothesis $A$ ($E\neq e^{NP}$ ) that provides a smaller mean value error when the size $n$ of the time series  goes to infinity.

We assume that the monotonous sequence  $u_n$, $n \in \mathbb{N}$, in the $NP$ tests is such that $u_n \to E^{NP}$. For each possible value $E$ we consider a monotonous sequence $\tilde{u}_n$, such that  $\tilde{u}_n\to E$.  Without loss of generality, we assume that any monotonous sequence $\tilde{u}_n$, $n \in \mathbb{N}$, can represent
a given value $E$.

For the alternative test $A$ the limit value $E$ is the most important issue and not the specific values $\tilde{u}_n$. Note that given the sequence $\mu_{0}\left(S_n<u_n \right)$ there exists a monotonous sequence
$\tilde{u}_n$, such that,
\begin{equation} \label{relation22}
\mu_{0}\left(S_n<\tilde{u}_n \right)=\mu_{0}\left(S_n<u_n \right).
\end{equation}
This  shows that the  future  condition \eqref{relation} does not concern an empty set of cases.

Remember that the \emph{type II error} at time $n$ is denoted by $\mu_{1}\left( S_n\geq  \tilde{u}_n \right) =1-\tilde{\beta}_n=1 -\mu_1(\mathcal{ R}_n^A)$.

In the next theorem, we show that $E=E^{NP}=\int \log J_0 \,d \mu_{0} \,-   \int \log\,J_1\,\mathrm{d}\mu_{0}$ is the best choice for $E\neq E^{NP}$, in terms of getting
$$
1-\tilde{\beta}_n=1- \mu_1(\mathcal{ R}_n^A)>1-\mu_1( \mathcal{R}_{n}^{NP}),
$$
for large $n$ and for any alternative test $A$.  This will be  proved under
the condition \eqref{relation} for $u_n$ and $\tilde{u}_n$, which means that
the alternative hypotheses test $A$ has smaller or equal false alarm probability
than the $NP$ test.

\begin{thm}
\emph{ Consider the test where}
\begin{equation*}
u_n \to E^{NP}=  \int \log J_0 \,d \mu_{0} \,-   \int \log\,J_1\,\mathrm{d}\mu_{0}.
\end{equation*}
When comparing  this test,  with the  tests for other possible values of  $E\neq E^{NP}$, the one associated to $E^{NP}$ minimizes the type II error.

The best decay rate for minimizing the wrong decisions probabilities of the $E^{NP}$ test will be of order $\exp\{-\,n\, (\,\int \log J_0 \,d \mu_{0} \,-   \int \log\,J_1\,\mathrm{d}\mu_{0}\,)\}$.

The above is equivalent to say that
$$
\lim_{n \to \infty} \frac{1}{n} \log (\,\mu_{1} ( S_n> \tilde{u}_n)\,) <  \lim_{n \to \infty} \frac{1}{n} \log (\,\mu_{1} ( S_n> u_n)\,)$$
$$ =\,-\left(\int \log J_0 \,d \mu_{0} \,-   \int \log \,J_1\,\,d\mu_0\,\right)\, =\,-  E^{NP},$$
under the hypotheses
$$\mu_{0}\left(S_n<\tilde{u}_n \right)\leq \mu_{0}\left(S_n<u_n \right).$$
\end{thm}
\proof  For the sequence $u_n$ (which converges to $E^{NP}$) we set
\begin{equation*}
\mu_{0}\left(S_n<u_n \right):=\alpha_n, \,n \in \mathbb{N}
\end{equation*}
which  describes \emph{the false alarm} in the $NP$ test.

We denote any other alternative hypotheses test by $A$.
To each different alternative hypotheses test $A$ corresponds  a different choice of $E$, such that, $\tilde{u}_n\to E=E^A$.

We are interested in optimizing the value of the probability of announcing $H_1$ when $H_1$ is true, that is, in optimizing the value $\tilde{\beta}_n=\mu_1 (S_n - \tilde{u}_n<0)$, $n \in \mathbb{N}$. The \emph{rejection region} for the $NP$ hypotheses test, at time $n$, is denoted by $\mathcal{R}_{n}^{NP}$ and defined as
\begin{equation*}
\mathcal{R}_{n}^{NP}:= \left\{x \in \Omega|
S_n(x)  <u_n\right\}.
\end{equation*}
\noindent Similarly, the \emph{rejection region} for another alternative hypotheses test $A$, at time $n$, will be denoted by $\mathcal{R}_{n}^{A}$, where
\begin{equation*}
\mathcal{R}_{n}^{A}:= \left\{x \in \Omega|
S_n(x) <\tilde{u}_n\right\},
\end{equation*}
\noindent for another sequence $\tilde{u}_n>0$.

Assume that $\tilde{u}_n$ is such that

\begin{equation} \label{relation}
\mu_{0}\left(S_n<\tilde{u}_n \right)\leq\alpha_n=\mu_{0}\left(S_n<u_n \right),
\end{equation}
for all $n$. This means
\begin{equation}\label{relation2}
-\mu_{0}\left(S_n<u_n \right) + \mu_{0}\left(S_n<\tilde{u}_n \right)\leq 0.
\end{equation}
Note that $\tilde{u}_n \leq u_n$, for all $n$. The inequality \eqref{relation2} means we are assuming that the alternative hypotheses test $A$ has a smaller or equal false alarm probability. In other words, we do not consider tests with larger size test
than the one obtained in the  $NP$ case. Each choice of the sequence $\tilde{u}_n$, $n \in \N$, will be understood as an alternative possible hypothesis. For each alternative hypotheses test $A$, we set the associated value
\begin{equation*}
\tilde{\beta}_n \, :=\, \mu_{1} ( S_n< \tilde{u}_n),
\end{equation*}
for any $n \in \mathbb{N}$.

Remember that  $\tilde{\beta}_n = \mu_1(\mathcal{ R}_n^A)$ and $\beta_n =\mu_1( \mathcal{R}_{n}^{NP})$. The optimality property means to compare the value $1- \beta_n$ of the  $NP$ test with the value $1-\tilde{\beta}_n$ of  another alternative test $A$. This will be   based on the  values
$\mathcal{ R}_n^A,$ $n \in \mathbb{N},$ using a sequence $\tilde{u}_n$ satisfying \eqref{relation} and, assuming  that
\begin{equation}  \label{G}
E:= \lim_{n\to \infty}   \tilde{u}_n < \lim_{n\to \infty}   u_n= \int \log J_0 \,d \mu_{0} \,-   \int \log\,J_1\,\mathrm{d}\mu_{0}=E^{NP},
\end{equation}
due to \eqref{yoko234}.

We shall assume $E>0$. We point out that,  for the alternative hypotheses test $A$, the associated value $\tilde{\beta}_n$ satisfies
\begin{equation*}
\mu_1 (S_n - \tilde{u}_n <0)\,= \mu_1(\mathcal{ R}_n^A)= \tilde{\beta}_n.
\end{equation*}

We want to show that
\begin{equation*}
\tilde{\beta}_n  <\beta_n =\mu_1( \mathcal{R}_{n}^{NP} )=\mu_1 (S_n - u_n <0),
\end{equation*}
\noindent for large $n$. More precisely, we want to show that
\begin{equation} \label{orty}
\lim_{n \to \infty} \frac{1-\beta_n}{1-\tilde{\beta}_n}\,=0.
\end{equation}

The expression \eqref{orty} will guarantee that the $NP$ test is exponentially better than any other alternative test $A$.

It is known that
\begin{equation*} \label{zz}
1-\beta_n \sim e^{-n\,  I_1(0) }= e^{-n (\int \log J_0 d \mu_0 - \int \log \,J_1\, d  \mu_0)}.
\end{equation*}
Now we will show that, for large $n$,
\begin{equation} \label{rr}
\mu_{1} (\mathcal{R}_{n}^{NP})= \beta_n=\mu_{1} ( S_n< u_n) \geq  \tilde{\beta}_n = \mu_{1} ( S_n< \tilde{u}_n)  = \mu_{1}
(\mathcal{ R}_n^A),
\end{equation}
or, equivalently, that
\begin{equation} \label{rrr}
1-\beta_n=\mu_{1} ( S_n\geq  u_n) \leq  1-\tilde{\beta}_n \, =\, \mu_{1} ( S_n> \tilde{u}_n).
\end{equation}

Note that
\begin{equation} \label{kwr}
\mu_{1} ( S_n> \tilde{u}_n)  = \mu_{1} ( S_n - u_n> \tilde{u}_n-u_n).
\end{equation}

As $u_n \to (\int \log J_0 \,d \mu_{0} \,-   \int \log\,J_1\,\mathrm{d}\mu_{0})$ and $\tilde{u}_n \to E$, from expressions \eqref{kwr}, \eqref{sup}, \eqref{esqu} and \eqref{tri11}, we get the following limit
\begin{equation*}
\lim_{n \to \infty} \frac{1}{n} \log (\,\mu_{1} ( S_n> \tilde{u}_n)\,) =
 - \, \inf \left \{ I_1(x)\,|\, x \geq  E - \left(\int \log J_0 \,d \mu_{0} \,-   \int \log\,J_1\,\mathrm{d}\mu_{0} \right)\right \}.
\end{equation*}

\medskip

We set
\begin{equation} \label{pou}
G_1 : = E - \left(\int \log J_0 \,d \mu_{0} \,-   \int \log \,J_1\,\mathrm{d}\mu_{0} \right),
\end{equation}
and, from  expression \eqref{G} we get that $G_1<0$.

\begin{figure}[!htb]
	\centering
	{\includegraphics[scale=0.27]{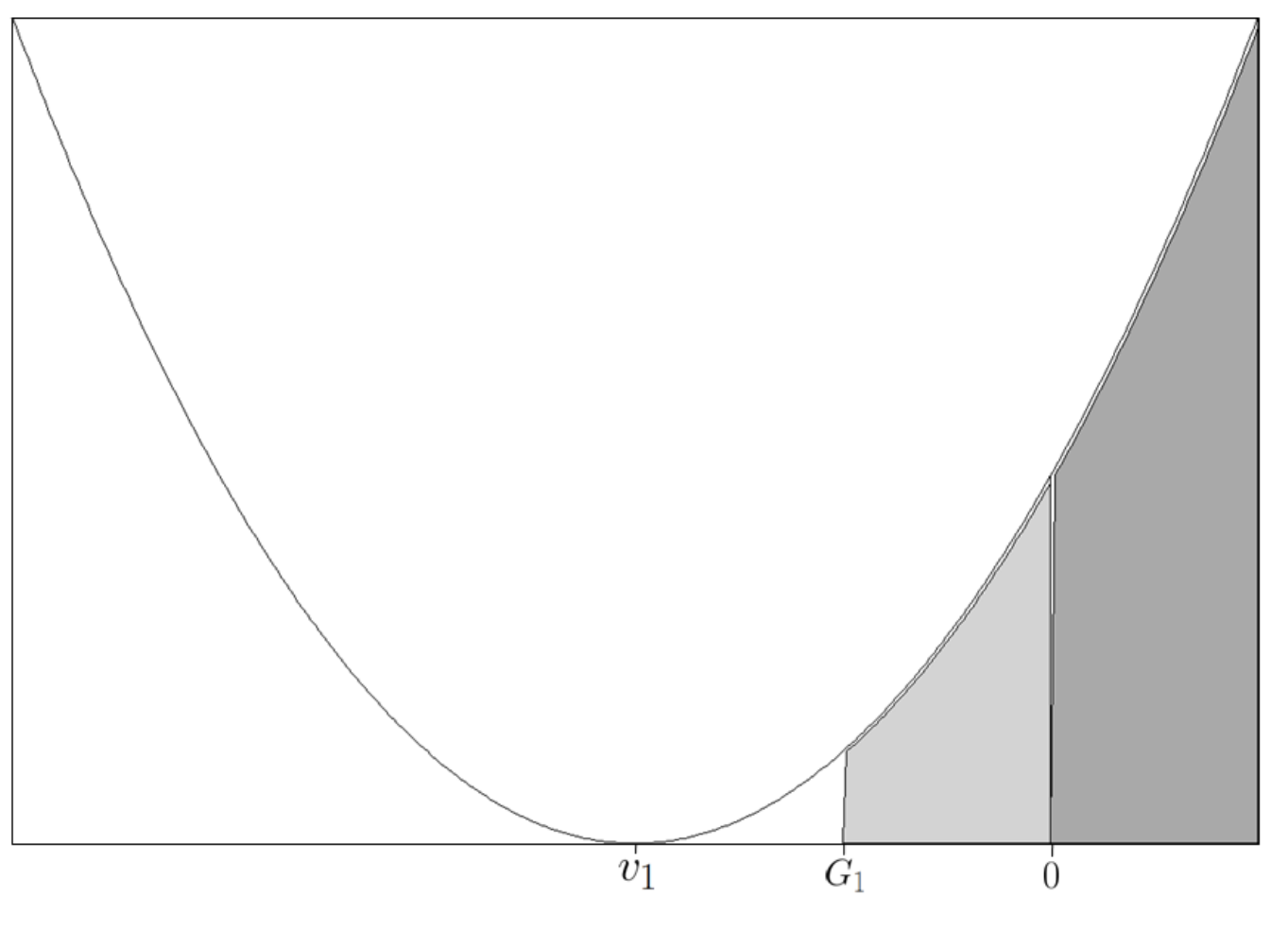}}
\caption{The large deviation rate function $I_1(\cdot)$ at points $v_1$,
$G_1$ and zero, where $G_1 = E - \left(\int \log J_0 \,d \mu_{0} \,-   \int \log \,J_1\,\,d\mu_0\,\right)$.}
\label{Figure_4.1}
\end{figure}

Observe that $v_1< G_1$. Indeed, as $E>0$, from the expression \eqref{pou}, one has
\begin{align}
\nonumber
v_1 & =    \,\left(\int \log J_{0\,}  d \mu_{1}-\int \log J_{1} d \mu_{1}\right)  -\left ( \int \log J_0 \,d \mu_{0} \,-   \int \log\,J_1\,\mathrm{d}\mu_{0}   \right)\\
\nonumber
& < E - \left(\int \log J_0 \,d \mu_{0} \,-   \int \log \,J_1\,\mathrm{d}\mu_{0} \right).
\end{align}

Note that from the strict convexity of the pressure it follows that its Legendre transform $I_1$ is strictly monotone on the interval $(v_1,0)$.

Then, from the expression \eqref{pou}, we obtain
\begin{align*}
\nonumber
I_1 (G_1) &=  I_1\left( G - \left(\int \log J_0 \,d \mu_{0} \,-   \int \log\,J_1\,\mathrm{d}\mu_{0} \right)\right)<
\int \log J_0 \,d \mu_{0} \,-   \int \log \,J_1\,\,d\mu_0\\
\nonumber
& = I_1(0).
\end{align*}

From the above we get that
\begin{equation} \label{expp}\lim_{n \to \infty} \frac{1}{n} \log (\,\mu_{1} ( S_n> \tilde{u}_n)\,) =
-I_1(G_1)  >- I_1(0) .
\end{equation}

Therefore,
\begin{align*}
\lim_{n \to \infty} \frac{1}{n} \log (\,\mu_{1} ( S_n> \tilde{u}_n)\,)& =
-I_1(G_1) >- I_1(0) \\
& = -\left(\int \log J_0 \,d \mu_{0} \,-   \int \log \,J_1\,\,d\mu_0\,\right) \\
& =  \lim_{n \to \infty} \frac{1}{n} \log (\,\mu_{1} ( S_n> u_n)\,).
\end{align*}

This proves that the expressions \eqref{rrr} and \eqref{orty} are true.
\qed

Figure \ref{Figure_4.1}  shows the large deviation rate function $I_1(\cdot)$ at points $v_1$, $G_1$ and zero, where the point $G_1$ is given by \eqref{pou}.
\section{The Min-Max Hypotheses Test} \label{MinMax}
\renewcommand{\theequation}{\thesection.\arabic{equation}}
\setcounter{equation}{0}

In this section, we shall present the Min-Max Hypotheses test in the dynamical
sense. Once more, for $S_n$, $n \in \mathbb{N}$, given by \eqref{sma},
$\mu_1 \{x\,|\, S_n>  u_n\}$ is associated to a wrong decision by
announcing $H_0$ when $H_1$ is true while $\mu_0\{x\,|\, S_n\leq  u_n\}$
is associated to a wrong decision by announcing $H_1$ when $H_0$ is true.
For each value $E$ we consider a sequence $u_n$, such that,
\eqref{yuo} holds. In the same way as before, the limit value $E$ is more
important than the specific values $u_n$.

In this section we shall consider large deviation properties
for both $\mu_1 \{x\,|\, S_n>  u_n\}$ and $\mu_0 \{x\,|\, S_n<  u_n\}$.
For the Min-Max hypotheses test, we need loss functions for a false alarm. This
is a classical ingredient in Hypotheses Tests (see \cite{Cover} or \cite{Roh}).

Here we consider the case when the loss functions for false alarm for $H_0$ and $H_1$ are constants, respectively, given by $y_0$ and $y_1$. The main question here is once
again what is the best value for $E$? The idea behind the use of loss functions is wrong decisions can have a cost. We are interested in finding some optimality property in this setting. From the large deviation properties for this setting, for each choice of limit value $E$ we shall obtain $C_1(E)=C_1 > 0$ and $C_0(E)=C_0> 0$, such that,
\begin{equation*}
\mu_1 \{x\,|\, S_n>  u_n\}\sim \,e^{-C_1\,   n}\ \ \mbox{and} \ \ \mu_0 \{x\,|\, S_n\leq   u_n\}\sim \, e^{-C_0\, n}.
\end{equation*}

In the Min-Max hypotheses test, we have to compare the  asymptotic values of the maximum of
\begin{equation} \label{mmi} y_1\,\mu_1 \{x\,|\, S_n>  u_n\}\sim y_1\,e^{-C_1\,   n} \, \,\,\text{and} \,\,
y_0\, \mu_0 \{x\,|\, S_n\leq   u_n\}\sim y_0\, e^{-C_0\, n},
\end{equation}
that shall take into account the loss functions $y_1$ and $y_0$, for each value $E$.
Finally, we shall consider the minimum $\tilde{E}$ among all possible values of $E$,
that is, the minimum of the function
$E \to \max\{C_0(E), C_1(E)\}$. This means that in the Min-Max hypotheses test we are interested in minimizing the maximal cost of wrong decisions by taking either $H_0$ or $H_1$.
Our main result is:

\begin{thm} \label{eer} In the Min-Max hypotheses test, the best choice of $E$ will be $E=0$. Moreover, the best decay rate for minimizing the probability of wrong decisions is
given by $e^{n \,r}$, where $r<0$ is the minimum value of the pressure function $P_1$. The value
$r$ is determined by expression \eqref{derivative123}.\end{thm}

Given an interval $(a,b) \subset \mathbb{R}$ we will be interested simultaneously in Large Deviation properties for $S_n- u_n$, where $u_n \to E$. Then, we have to estimate both $\mu_1 \{ (S_n -u_n) \in (a,b)\}$ and   $\mu_0 \{ (S_n -u_n) \in (a,b)\}$.  This problem was addressed in Section \ref{LDP}. The main properties we shall need in the Min-Max hypotheses test are related to the deviation functions $I_1$ and $I_0$ values.
From Section \ref{LDP}, for each value $E$, we obtain the corresponding values $t^E_1$ and $t^E_0$, such that
\begin{equation*}
\frac{d}{dt} P_1  (t^E_1)=E \ \ \mbox{and} \ \   \frac{d}{dt} P_0  (t^E_0)=E.
\end{equation*}
From the expression \eqref{lui}, we obtain $t^E_0=t^E_1 -1$. And, from expression
\eqref{tri11}, we get
\begin{equation*}
I_j(0) =   t^E_j \,E- P_j(t^E_j),
\end{equation*}
for $j=0,1$.
Recall, from expression \eqref{lulu}, that $I_0(0) = I_1(0)- E$.

Denote by $c^+>0$, the limit of the derivative
\begin{equation*}
\frac{d}{dt} P_1(t)|_{t} =\frac{d}{dt}(\, P(t (\log J_{0} - \log J_{1\,}) + \log J_{1\,})|_{t},
\end{equation*}
when $t$ goes to infinity. The value $c^+$ is the maximal value of the ergodic optimization
for the potential $\log J_{0} - \log J_{1\,}$ (see \cite{BLL} and also Section \ref{Example}). When $E \to c^{+}$, we get that
\begin{equation*}
 P_1(t^E_1) -E= P_1(t^E_1) -   \frac{d}{dt}  P_1(t)|_{t=t^E_1}\to  \infty,
\end{equation*}
since $t^E_1$ goes to infinity. On the other hand, denote by $c^{-}<0$,
the limit of the derivative
\begin{equation*}
\frac{d}{dt} P_0(t)|_{t} =\frac{d}{dt}(\, P(t (\log J_{0} - \log J_{1\,}) + \log J_{0\,})|_{t},
\end{equation*}
when $t\to -\infty$. The value $c^-$ is the maximal value of the ergodic optimization
for the potential $\log J_{1} - \log J_{0\,}$ (see \cite{BLL} and also Section \ref{Example}).
When $ E \to c^{-}$, we get that
\begin{equation*}
P_0(t^E_0) -E= P_0(t^E_0) -  \frac{d}{dt} P_0(t)|_{t=t^E_0} \to \infty,
\end{equation*}
since $t^E_0 \to - \infty$.

\begin{rmk} \label{rema}
Observe that both $I_0(0)\geq 0$ and $I_1(0)\geq 0$ {\bf depend on} $E$.
Only the values of $E$, such that, $I_1(0)=t^E_1 \,E - P_1 (t^E_1)\geq 0$  and $I_0(0)=t^E_0 \,E - P_0 (t^E_0)\geq 0$, are relevant for the Min-Max hypotheses test analysis. Indeed, if one of the two options do not happen, then
\begin{equation*}
y_1\,\mu_1 \{x\,|\, S_n>  u_n\}\sim y_1\,e^{-C_1\,   n} \ \ \mbox{or} \ \
y_0\, \mu_0 \{x\,|\, S_n\leq   u_n\}\sim y_0\, e^{-C_0\, n}
\end{equation*}
will be large and this value of $E$ should be discarded in the search for the optimal $\tilde{E}$.
\end{rmk}

Recall that from expressions \eqref{klm1} and \eqref{klm2}, if $ v_0<0$, then $I_0(0)=0$ and,  if $ v_1>0$, then $I_1(0)=0$.

If $C_0>C_1$, then the dominant part of the maximum of \eqref{mmi} is $ y_1\,e^{- C_1 n}$. On the other hand, if $C_1>C_0$, then the dominant part of the maximum of the
same expression \eqref{mmi} is $y_0\, e^{- C_0 n}$. The specific values of $y_0$ and $y_1$ are irrelevant for this test and we just have to look for the {\bf minimum value} $\tilde{E}$ of the function
\begin{equation*}
E \to r(E):= \max\{\, \inf \{ I_0 (x)\,|\, x\leq 0\} , \inf \{ I_1 (x)\,|\, x\geq 0\}\, \},
\end{equation*}
but only for values $E$ such that both $I_0 (0)> 0$ and $I_1(0) >0$.

From the expression \eqref{lulu}, we have that $ I_0(0)=  t^E_0 E- P_0(t^E_0) =  I_1(0) - E$. Then, we just have to find  the minimum value $\tilde{E}$ of the function
\begin{equation} \label{casea}  E \to \inf \{  I_1 (0) , I_0 (0)   \} =\inf \{  I_1 (0) , I_1 (0) - E  \} ,
\end{equation}
for values $E$, such that both $I_0(0)>0$ and $I_1(0)>0$.

From Section \ref{LDP}, we have that $\mu_1 \{x\,|\, S_n>  u_n\}\sim e^{- n\, \inf \{ I_1 (x)\,|\, x\geq 0\} }$, which depends on each value of $E$ through the limit \eqref{yuo}. These values $\mu_1 \{x\,|\, S_n>  u_n\} $ will be maximum when $\inf \{ I_1 (x)\,|\, x\geq 0\}$ is minimum. In the same way, according to Section \ref{LDP},
for each value of $E$, we have that $\mu_0 \{x\,|\, S_n>  u_n\}\sim e^{- n\, \inf \{ I_0 (x)\,|\, x\leq 0\} }$. These values   $\mu_0 \{x\,|\, S_n\leq   u_n\}$      will be maximum when $\inf \{ I_0 (x)\,|\, x\leq 0\}$ is minimum.

In the search for the optimal $\tilde{E}$, we consider several different cases according to the position of $E$ in the set $(c^{-},c^{+})$.

\begin{itemize}
\item {\bf Case 1:} $c^{-}<E< \int (\log J_{0} - \log J_{1\,})\, d \mu_1 <0<  \int (\log J_{0} - \log J_{1\,})\, d \mu_0< c^{+}$.
In this situation, $\inf \{ I_0 (x)\,|\, x\leq 0\}= E- P_0(t^E_0) $ and  $ \inf \{ I_1 (x)\,|\, x\geq 0\}=0$. Hence, $v_1>0$ and $v_0>0$.
Therefore, such values of $E$ should be discarded according to Remark \ref{rema}.

\item {\bf Case 2:} $ c^{+}<\int (\log J_{0} - \log J_{1\,})\, d \mu_1 <0<  \int (\log J_{0} - \log J_{1\,})\, d \mu_0<E<c^{+}$.
In this situation,   $\inf \{ I_0 (x)\,|\, x\leq 0\}= 0 $ and
$ \inf \{ I_1 (x)\,|\, x\geq 0\}= E - P_1(t^E_1)$.
Hence, $v_1<0$ and $v_0<0$. Therefore, such values of $E$ should be discarded according to Remark \ref{rema}.

\item {\bf Case 3:} $\int (\log J_{0} - \log J_{1\,})\, d \mu_1 \leq E \leq  \int (\log J_{0} - \log J_{1\,})\, d \mu_0$. As $r$ is a continuous function it follows from the above that there exists a minimum $\tilde{E}$ for the function described by \eqref{casea} restricted to this interval of values of $E$. This corresponds to $v_1<0$ and $v_0>0$.
\end{itemize}

Observe that, in {\bf Case 3}, $t^E_1 $ range in an increasing monotonous way from $0$ to $1$. From Section \ref{LDP} we obtain $\inf \{ I_0 (x)\,|\, x\leq 0\}= t^E_0\, E- P_0(t^E_0) $ and $ \inf \{ I_1 (x)\,|\, x\geq 0\}=t^E_1\, E- P_1(t^E_1)$.
When $\int (\log J_{0} - \log J_{1\,})\, d \mu_1\leq E<0$, from \eqref{casea}
we obtain
\begin{equation} \label{cnn7}
r(E)=  t^E_1\,E \,- P_1(t^E_1)- E =  (t^E_1-1)\,E  - P_1(t^E_1),
\end{equation}
and, when $\int (\log J_{0} - \log J_{1\,})\, d \mu_0\geq  E>0$, from \eqref{casea}
we obtain
\begin{equation} \label{cnn8}
r(E)=  t^E_1\,E\, -\, P_1(t^E_1).
\end{equation}

We shall analyze the following two functions: for $\int (\log J_{0} - \log J_{1\,})\, d \mu_1 \leq E \leq  \int (\log J_{0} - \log J_{1\,})\, d \mu_0$
\begin{equation} \label{sil2}
E \to P_1(t^E_1) -t^E_1\,E +E =  P_1(t^E_1) -(t^E_1\,-1)   P_1^{\prime} (t^E_1)
\end{equation}
and
\begin{equation} \label{sil1}
E \to P_1(t^E_1) -t^E_1\,E=  P_1(t^E_1) -t^E_1\,   P_1^{\prime} (t^E_1).
\end{equation}
Observe that \eqref{sil1} is a monotonous decreasing function. Indeed,
$$
\frac{d}{d E} [\,P_1(t^E_1) -t^E_1\,   P_1^{\prime} (t^E_1)\,]= - \,(  t^E_1)\, (  t^E_1)^{\prime}\, P_1^{\prime \prime} ( t^E_1  )<0,
$$
since $(  t^E_1)^{\prime}>0$, $t^E_1>0$ and the result follows from the convexity.
On the other hand, expression \eqref{sil2} is a monotonous increasing function
since
$$
\frac{d}{d E} [\,P_1(t^E_1) -t^E_1\,   P_1^{\prime} (t^E_1)+ P_1^{\prime} (t^E_1) \,]=  \,(1-  t^E_1)\, (  t^E_1)^{\prime}\, P_1^{\prime \prime} ( t^E_1  )>0,
$$
since $ (1-  t^E_1)$, $(t^E_1)^{\prime}>0$ and the result follows from the convexity.

When $E= \int (\log J_{0} - \log J_{1\,})\, d \mu_1$, which is a negative value, we have
$t^E_1=0$ and $P_1(t^E_1)- t^E_1  E +E=P_1(0)- 0  E +E= E$. Hence, expression \eqref{sil2} is equal to $\int (\log J_{0} - \log J_{1\,})\, d \mu_1<0$.
On the other hand, when $E= \int (\log J_{0} - \log J_{1\,})\, d \mu_0$, which is
a positive value, we have $t^E_1=1$ and $P_1(t^E_1)=0$. Hence,  $ P_1(t^E_1)-E+E=-E+E=0$.
This describes the values of the function given by \eqref{sil2} on the interval $\int (\log J_{0} - \log J_{1\,})\, d \mu_1 <E <  \int (\log J_{0} - \log J_{1\,})\, d \mu_0$. Note that when $E=0$ the two functions \eqref{sil1} and \eqref{sil2} coincide.

Now, we shall analyze the function \eqref{sil1}.  At the point $E= \int (\log J_{0} - \log J_{1\,})\, d \mu_1<0$, we get that $t_1^E= 0$, and \eqref{sil1} is equal to $0$. Moreover, when $E= \int (\log J_{0} - \log J_{1\,})\, d \mu_0>0$, then, $t^E_1=1,$ and the function \eqref{sil1} attains the value
$$
-  \int (\log J_{0} - \log J_{1\,})\, d \mu_0 <0.
$$
Therefore, there exists a unique point $E$ where the two functions \eqref{sil1} and \eqref{sil2} are equal. This is the value $\tilde{E}=0$, and then, $t_0^1$ is such that $P_1^{\prime} (t_0^1)=0$. Therefore,
$$
r(\tilde{E})=  r(0)=P_1(t_E^1) -t_0^1\,E\, =  P_1(t_0^1),
$$
which is the {\bf minimum value of the function $P_1$}.

It follows from the above  and from expression \eqref{casea} that the minimum of $r(E)$ on the interval  $\int (\log J_{0} - \log J_{1\,})\, d \mu_1 <E <  \int (\log J_{0} - \log J_{1\,})\, d \mu_0$  is attained when $\tilde{E}=0$.
The point $\tilde{E} $ satisfies  $P_1 ^{\prime} (t_{\tilde{E}}^1) =
P_1^{\prime} (t_0^1)=0$.

In this case, the Min-Max solution for $\tilde{E}$ satisfies
\begin{equation} \label{mmi17}
\min \max \{\, y_1\,\mu_1 \{x\,|\, S_n>  u_n\}, \,
y_0\, \mu_0 \{x\,|\, S_n\leq   u_n\}\,\,\}\sim \, e^{P_1(t_0^1)\, n},
\end{equation}
giving us the best rate.

From \eqref{derivative1} we get that
\begin{equation}\label{derivative12}
\frac{d}{dt}P_1 (t)|_{t_0^1} =\frac{d}{dt}\left(\, P( t (\log J_{0} - \log J_{1\,}) + \log J_{1\,}\right)|_{t_0^1}= \int (\log J_{0} - \log J_{1\,}) d \mu_{t_0^1}^1=0,
\end{equation}
where $\mu_{t_0^1}^1$ is the equilibrium probability for $t_0^1 (\log J_{0} - \log J_{1\,}) + \log J_{1\,}$.

Therefore,
\begin{equation}\label{derivative123}r= P_1( t_0^1) = h(\mu_{t_0^1}^1) + t_0^1 \int ( \log J_0 - \log J_1) d \mu_{t_0^1}^1 + \int \log J_1 \mu_{t_0^1}^1=  h(\mu_{t_0^1}^1) + \int \log J_1 \mu_{t_0^1}^1.
\end{equation}

\section{A Bayesian Hypotheses Test and a Chernoff's Information Version} \label{Bayes}
\renewcommand{\theequation}{\thesection.\arabic{equation}}
\setcounter{equation}{0}

In this section, we are interested in finding a version of  Chernoff's information for the setting of Thermodynamic Formalism (see theorem 11.9.1 in \cite{Cover}).

Given two H\"older Jacobians $J_0$ and $J_1$, for a given parameter $0 \leq \lambda \leq 1$, consider their convex combination given by

\begin{equation} \label{sac}J_\lambda = \lambda J_ 1 + (1-\lambda) J_0.
\end{equation}
We point out that $J_\lambda$ is also a H\"older Jacobian and our setting has a different nature than the one mentioned in section 10 of \cite{Cover}. Making an analogy, for being consistent with \cite{Cover},  we should consider a convex combination of the logarithm of the Jacobians $J_0$ and $J_1$; instead, we did not use the logarithm function. In \cite{Cover} the probabilities are on finite sets.

We denote by $\mu_\lambda$ the H\"older Gibbs probability associated with $\log J_\lambda$.
For $\lambda\in  [0,1]$ and $n \in \mathbb{N}$, set $S_n^\lambda (x)$ as
\begin{equation*}
S_n^\lambda (x):=\,\,\frac{1}{n} \sum_{i=0}^{n-1} \log\left(J_\lambda(\sigma^i (y)) \right).
\end{equation*}

The \emph{rejections regions} are of the form
\begin{equation} \label{yokono1}
\mathcal{R}_{n,\lambda}:=\left\{x\in \Omega \,|\, \frac{1}{n} \sum_{i=0}^{n-1} \log J_{\lambda}(\sigma^i (y))   < u_n\right\}, \ \ \mbox{ for } \ \ n\in \mathbb{N}.
\end{equation}

The \emph{a priori probability} of $H_0$ is given by $\pi_0$ and the \emph{a priori probability} of $H_1$ is given by $\pi_1= 1 -\pi_0$. We shall consider for the Bayesian hypotheses test a sequence $u_n \to E$, $n \in \mathbb{N}$, in the same way as in \eqref{yuo}. The expression
\begin{equation*}
\pi_1\ \mu_1 \{x\,|\, S_n^\lambda>  u_n\}
\end{equation*}
represents the mean value probability of a wrong decision by announcing $H_0$ when $H_1$ is true while the expression
\begin{equation*}
\pi_0\,  \mu_0 \{x\,|\, S_n^\lambda\leq  u_n\},
\end{equation*}
represents the mean value probability of a wrong decision by announcing $H_1$ when $H_0$ is true.

In the Bayes hypotheses test, for each value of $\lambda\in[0,1]$, we want to minimize the average total mean probability. We want to choose $u_n$, $n \in \mathbb{N}$, this means to choose $E$, that asymptotically minimizes
\begin{equation} \label{zcafe}
\pi_0\, \mu_0 \{x\,|\, S_n^\lambda\leq  u_n\} + \pi_1\, \mu_1 \{x\,|\, S_n^\lambda>  u_n\}, \ \ \mbox{as} \ \  n \to \infty.
\end{equation}

We shall denote by $E_\lambda$ the best value of $E$, for each value $\lambda$. We will show later the explicit expression for such $E_\lambda$. We can also ask: among the  different values of $\lambda$ which one determines the best $E_\lambda$, in the sense of getting the best rate? We shall denote by $\tilde{E},\tilde{\lambda}$ the optimal value, among all possible values of $E$ and $\lambda$, for the asymptotic \eqref{zcafe} minimizer.
In our reasoning, we want to find the best choice of $\tilde{\lambda}$ for which a best choice of $\tilde{E}$ is possible.

In the same way as before, it will follow that, for each choice of $\lambda$ and limit value $E$, we shall obtain $C_1(E,\lambda)=C_1\geq 0$ and $C_0(E,\lambda)=C_0\geq 0$, such that,
\begin{equation} \label{zcafe2}
\mu_1 \{x\,|\, S_n^\lambda>  u_n\}\sim \,e^{-C_1\,   n} \ \ \text{ and }\ \
\mu_0 \{x\,|\, S_n^\lambda\leq   u_n\}\sim \, e^{-C_0\, n}\,.
\end{equation}

From the expression \eqref{zcafe2}, for each $E$ and $\lambda$ we get that
the asymptotic of \eqref{zcafe} is of order
\begin{equation} \label{zcafe1}
e^{- \, \min\{C_0(E,\lambda),C_1(E,\lambda)\}\, n}.
\end{equation}

When $C_0(E,\lambda)=0$ or $C_1(E,\lambda)=0$ we do not get the optimal values of $E$ and $\lambda$ for the asymptotic in \eqref{zcafe}. Such values of $E$ and $\lambda$ should be discarded. We will show that for the optimal solution it is required that $C_0(E,\lambda)=C_1(E,\lambda)$. This optimal solution is called \emph{Chernoff information} (we refer the reader to the end of the proof of theorem 11.9.1 in \cite{Cover}, which
considers a different setting).

The optimal choice of $E$ and $\lambda$ will be described by expressions
\eqref{cross}, \eqref{relentropy} and \eqref{xcafe4} at the end of this section. The optimal value $C_0(E,\lambda)$ will have a relative entropy expression given by \eqref{relentropy}.

We shall be interested in estimating
\begin{align*}
&\mu_{1}\left( S_n^\lambda> u_n \right)= \mu_{1}\left( S_n^\lambda - u_n >0 \right),\ \
\mbox{ and also } \ \
\mu_{0}\left( S_n^\lambda\leq u_n \right)= \mu_{0}\left( S_n^\lambda - u_n \leq 0 \right),
\end{align*}
where $u_n \to E$. This requires to estimate
\begin{equation*}
\mu_{j} (\,(\, S_n^\lambda - u_n\,) \in (a,b)\,)\,=\mathbb{P}_{\mu_{j}}  \left( \frac{1}{n} \sum_{i=0}^{n-1}  \left[\log\left(J_{\lambda}(\sigma^i (y)) \right)- u_n \right]\in (a,b)  \right),
\end{equation*}
for $j=0,1$.

In order to get the correct large deviation rate we need first to analyze the
following expression
\begin{equation*}
\phi_{n,\lambda} ^j(t):=\frac{1}{n}\log\left( \int e^{\,  t \sum_{i=1}^{n}\,\log J_{\lambda}(\sigma^i (y)) }\mathrm{d} \mu_{j}(y) \right) - t  \,u_n,
\end{equation*}
for each $n$, $\lambda$ and real value $t$.

For $j=0,1$, $\lambda\in[0,1]$ and $t \in \mathbb{R}$,
\begin{equation*}
\lim_{n \to \infty}   \frac{1}{n}\log\left( \int e^{\,  t \sum_{i=1}^{n} \log J_{\lambda}(\sigma^i (y)) }\mathrm{d} \mu_{j}(y) \right)= P( t \,\log J_{\lambda}  + \log J_{j\,}).
\end{equation*}

Then, for $j=0,1$, $\lambda\in[0,1]$ and $t \in \mathbb{R}$, denote
\begin{equation*}
\phi_\lambda^j (t) :=  \lim_{n \to \infty} \phi_{n,\lambda}^j(t) =P( t \,\log J_{\lambda}  + \log J_{j\,}) - t \, E.
\end{equation*}
We denote by  $P_{j,\lambda} $, for $j=0,1$ and $\lambda\in[0,1]$, the function
\begin{equation*}
t \to P_{j,\lambda} (t)=P( t \,\log J_{\lambda} + \log J_{j\,}),
\end{equation*}
which is convex and also monotone decreasing in $t$. Moreover, $P_{j,\lambda}(0)=P( 0  \,\log J_{\lambda} + \log J_{j\,})=0$, for $j=0,1$ and for $\lambda\in[0,1]$.

Note that
\begin{equation} \label{krac}
\frac{d}{dt}P_{j,\lambda}(t)|_{t=0}= \frac{d}{dt}\, P( t \log J_{\lambda}  + \log J_{j\,})|_{t=0}= \int \log J_{\lambda}  d \mu_{j}.
\end{equation}
Furthermore, for $t \in \mathbb{R}$,
\begin{equation} \label{rer}
\frac{d}{dt}\, P( t \,\log J_{\lambda} + \log J_{j\,})|_{t}= \int \log J_{\lambda}  d \mu^{t,\lambda}_j <0,
\end{equation}
where $\mu^{t,\lambda}_j$ is the equilibrium probability for $t\log J_{\lambda} + \log J_{j\,}$.



The deviation function $I_j^\lambda$ for $(S_n^\lambda - u_n)$, $n \in \mathbb{N}$ and
for $\mu_j$, $j=0,1$, is
\begin{align}\label{dadai}
\nonumber
I_j^\lambda(x)& =\sup_t\left[t x -\phi_\lambda^j(t) \right]=
\sup_t\left[ t\left( x  + E\, \right)-  P(t \,\log J_{\lambda}  + \log J_{j\,}) \right] \\
&= \sup_t\left[ t\left( x  + E\, \right)-  P_{j,\lambda}( t ) \right].
\end{align}
If
\begin{equation*}
x=v_j^\lambda = v_j^{E,\lambda}= - E      \,+  \,\int \log J_{\lambda\,}  d \mu_{j},
\end{equation*}
then, $t=0$ and,
\begin{equation} \label{tri10}
I_j^\lambda(v_j^{E,\lambda})=0.
\end{equation}
The suitable values $v_j^{E,\lambda}$ are the ones such that
$v_1^{E,\lambda}<0$ and $v_0^{E,\lambda}>0$. For each fixed $\lambda$, this will require that
\begin{equation*}
E\geq   \,\int \log J_{\lambda\,}  d \mu_{1} \ \ \mbox{ and } \ \
E\leq   \,\int \log J_{\lambda\,}  d \mu_{0}.
\end{equation*}
We will show there exist values $\lambda$ such that it is possible to find a non-trivial interval for $E$. We just have to find values $\lambda$, such that
\begin{equation} \label{baba}
\int \log J_{\lambda\,}  d \mu_{0}> \int \log J_{\lambda\,}  d \mu_{1}.
\end{equation}
We claim that there are values of $\lambda$ such that the expression
\eqref{baba} holds. Indeed,
\begin{align*}
& \lambda=0 \Rightarrow \int \log J_{0\,}d \mu_{0} - \int \log J_{0\,}d \mu_{1}>0, \, \mbox{while} \,
\lambda=1 \Rightarrow \int \log J_{1\,}d \mu_{0} - \int \log J_{1\,}d \mu_{1}<0.
\end{align*}
There exists a value $\lambda$ such that
\begin{equation} \label{KLM}
\int \log J_{\lambda\,}  d \mu_{0} - \int \log J_{\lambda\,}  d \mu_{1}=0,
\end{equation}
that is, there exists a value $\lambda$ such that
\begin{equation*}
\int  \log (\lambda J_ 1 + (1-\lambda) J_0) d\mu_0= \int \log ( \lambda J_ 1 + (1-\lambda) J_0) d\mu_1.
\end{equation*}
In fact, consider the functions
\begin{equation*}
g_j (\lambda) = \int \log (\lambda J_ 1 + (1-\lambda) J_0) d\mu_j,
\end{equation*}
for $j=0,1$.

Note that $ g_0(0) > g_1(0) $ and $g_0 (1)< g_1(1)$.
From this fact, the claim follows. The functions $g_j$, for $j=0,1$, are concave and
$g_0$ is \emph{a decreasing function} while $g_1$ is \emph{an increasing} one. Besides,
the point where the two graphs coincide is unique.

Therefore, \emph{there exists a value $\lambda_s$}, such that for $0\leq \lambda<\lambda_s$, a non trivial interval of suitable parameters $E$ exists and it holds that
\begin{equation}\label{baba1}
0 >\int \log J_{\lambda\,}  d \mu_{0}>E > \int \log J_{\lambda\,}  d \mu_{1}.
\end{equation}
In this case, for such parameters $E$, we have $v_1^{E,\lambda}<0$ and $v_0^{E,\lambda}>0$.
From now on we assume that $E$ is in the interval described by the expression \eqref{baba1}.


When $x=0$, for $\lambda \in [0,\lambda_s]$, for $j=0,1$, we get $t^{E,\lambda}_j \in \mathbb{R}$, for which
\begin{equation} \label{caca1}
P_{j,\lambda}^{\prime } (t^{E,\lambda}_j )=\frac{d}{dt}\, P( t \log J_{\lambda}  + \log J_{j\,})|_{t^{E,\lambda}_j }=E=\int \log J_{\lambda}  d \mu_{t^{E,\lambda}_j },
\end{equation}
where  $\mu_{t^{E,\lambda}_j }$ is the equilibrium probability for $t^{E,\lambda}_j  \log J_{\lambda} + \log J_{j\,}$, where  $E$ satisfies \eqref{baba1}.

From the convexity argument and expression \eqref{rer}, for fixed $\lambda$ and for $j=0,1$, the value $t^{E,\lambda}_j $ is monotonous increasing on $E$. That is, for fixed $\lambda$ and for $j=0,1$, the function $E \to t^{E,\lambda}_j$ satisfies
\begin{equation} \label{tri113}
\frac{d}{d E} t^{E,\lambda}_j \,  >0.
\end{equation}
Denote $I_j^{E,\lambda}(0)$ by
\begin{align}\label{tri112}
\nonumber
I_j^{E,\lambda}(0) \,:\,& =t^{E,\lambda}_j  E-      P( t^{E,\lambda}_j  \log J_{\lambda} + \log J_{j\,})  = t^{E,\lambda}_j  E- P_{j,\lambda}(t^{E,\lambda}_j ) \\
& - \left[\int \log J_j d \mu_{t^{E,\lambda}_j }+ h(\mu_{t^{E,\lambda}_j})\right]=
- \left[\int \log J_j d \mu_{t^{E,\lambda}_j } - \int \log J_{j,\lambda,E  } (\mu_{t^{E,\lambda}_j})\right]>0,
\end{align}
where $J_{j,\lambda,E  }$ is the Jacobian of the invariant  probability $\mu_{t^{E,\lambda}_j }$ which is, by it turns, the equilibrium probability for    $t^{E,\lambda}_j  \log J_{\lambda} + \log J_{j\,}$, for $j=0,1$ and $\lambda \in [0,\lambda_s]$.

Using expressions \eqref{tri112}  and \eqref{dadai}, when $u_n \to E$,  one can rewrite them both, as mentioned in \eqref{zcafe}, by
\begin{equation} \label{tri200}
\mu_j \{x\,|\, S_n^\lambda>  u_n\}\sim \,e^{-\,  I_j^{E,\lambda}(0)  n},
\end{equation}
for $j=0,1$. Hence, in the notation of \eqref{zcafe2}, we get  $C_j(\lambda,E) = I_j^{E,\lambda}(0)$, for $j=0,1$.

Furthermore, for fixed $\lambda$ and $j=0,1$,
\begin{equation} \label{tri111}
\frac{d}{d E} \left[t^{E,\lambda}_j \,   P_{j,\lambda}^{\prime} (
t^{E,\lambda}_j )\,-\, P_{j,\lambda}(t^{E,\lambda}_j )\right]= \,(  t^{E,\lambda}_j )\, (  t^{E,\lambda}_j )^{\prime}\, P_{j,\lambda}^{\prime \prime} ( t^{E,\lambda}_j   ).
\end{equation}
Since
\begin{equation*}
P( 0  \,\log J_{\lambda} + \log J_{0\,})=0,
\end{equation*}
\begin{equation*}
\frac{d}{dt} P_{0,\lambda}(t)|_{t=0} = \int \log J_{\lambda}  d \mu_{0} \ \ \mbox{ and }
\ \ 0 >\int \log J_{\lambda\,}  d \mu_{0}>E.
\end{equation*}
From the pressure convexity, we get $t^{E,\lambda}_0<0$.

\begin{figure}[!htb]
	\centering
	{\includegraphics[scale=0.5]{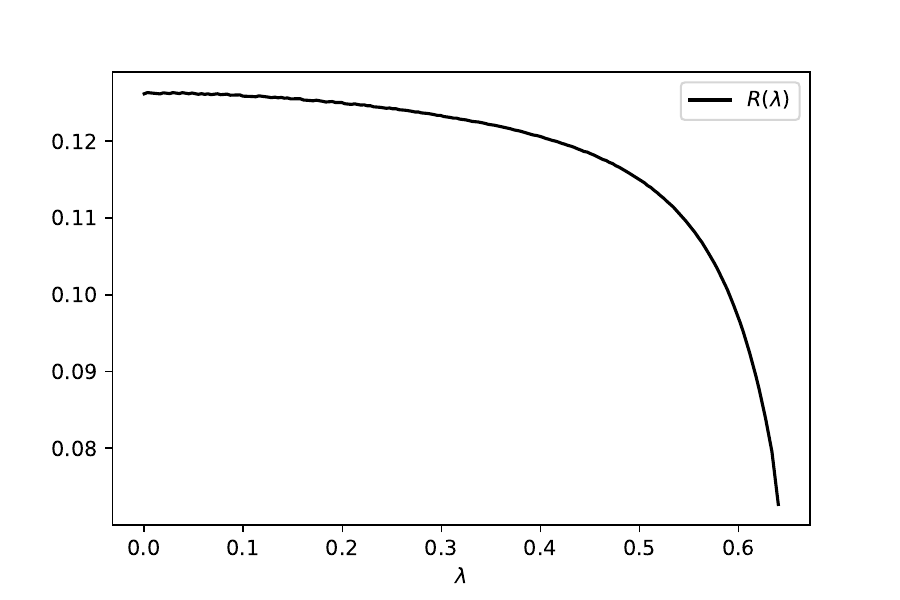}}
\caption{Graph of the function $R(\lambda) = I_0^{E_{\lambda, \lambda}}(0)$, when $0\leq \lambda\leq \lambda_s$, using the stochastic matrix $\mathcal{P}_{j}$, for $j=0,1$, from the example in Section \ref{Example}.}
\label{Figure_6.1}
\end{figure}

From expressions \eqref{tri112}, \eqref{tri111} and \eqref{tri113}, it follows that
we get
\begin{equation} \label{jke1}
E\,\to\, I_0^{E,\lambda}(0)\ \ \mbox{ decreases with }\ \ E,
\end{equation}
for each fixed $\lambda$.
As  $\int \log J_{\lambda\,}  d \mu_{1}<E$, we  obtain $  t^{E,\lambda}_ 1>0$.
From this property, one can show, in a similar way, that
\begin{equation} \label{jke2}
E\,\to\, I_1^{E,\lambda}(0) \ \ \mbox{ increases with }\ \ E,
\end{equation}
for each fixed $\lambda$.

For fixed $0\leq \lambda\leq \lambda_s$, consider the functions
\begin{equation*}
E\in \left[\int \log J_\lambda d \mu_1 ,\int \log J_\lambda d \mu_0  \right]  \to y_a(E)= I_0^{E,\lambda}(0)
\end{equation*}
and
\begin{equation*}
E\in \left[\int \log J_\lambda d \mu_1 ,\int \log J_\lambda d \mu_0 \right]  \to y_b(E)= I_1^{E,\lambda}(0).
\end{equation*}

As $y_a(  \int \log J_\lambda d \mu_0)=0 $, it follows, from the decreasing  monotonicity  (see expression \eqref{jke1}), that $y_a(  \int \log J_\lambda d \mu_1)>0$. In fact,
$\int \log J_\lambda d \mu_0 - \int \log J_\lambda d \mu_1>0$.

Moreover, as $y_b(  \int \log J_\lambda d \mu_1)=0 $, from the increasing monotonicity (see expression \eqref{jke2}), we get $y_b(  \int \log J_\lambda d \mu_0)>0$. Hence, \emph{for each} $\lambda$, $0\leq \lambda\leq \lambda_s$, \emph{there exists a point} $E_\lambda$, such that $I_1^{E_\lambda,\lambda}(0)=I_0^{E_\lambda,\lambda}(0)$. The value  $E_\lambda$ \emph{determines the best rate for the parameter} $\lambda$.

Note also that this point $E_\lambda$  belongs to the interval  $[\int \log J_\lambda d \mu_1 ,\int \log J_\lambda d \mu_0  ] $. Furthermore, if $\lambda_p<\lambda_q\leq \lambda_s$, then, as $g_0$ is a decreasing function and $g_1$ is an increasing one,
we have that  $[\int \log J_{\lambda_q} d \mu_1 ,\int \log J_{\lambda_q} d \mu_0  ] \subset [\int \log J_{\lambda_p} d \mu_1 ,\int \log J_{\lambda_p} d \mu_0  ] $.
Moreover, $E_{\lambda_s}=\int \log J_{\lambda_s} d \mu_1=\int \log J_{\lambda_s} d \mu_0 $.

Note that, for any $0\leq \lambda<\lambda_s$, the value
\begin{equation*}
\int \log J_{\lambda_s} \,d\,  \mu_{t^{E_{\lambda_s} ,\lambda_s}_1 }     \in \left[\int \log J_{\lambda} d \mu_1 ,\int \log J_{\lambda} d \mu_0  \right].
\end{equation*}

\begin{figure}[!htb]
	\centering
	{\includegraphics[scale=0.5]{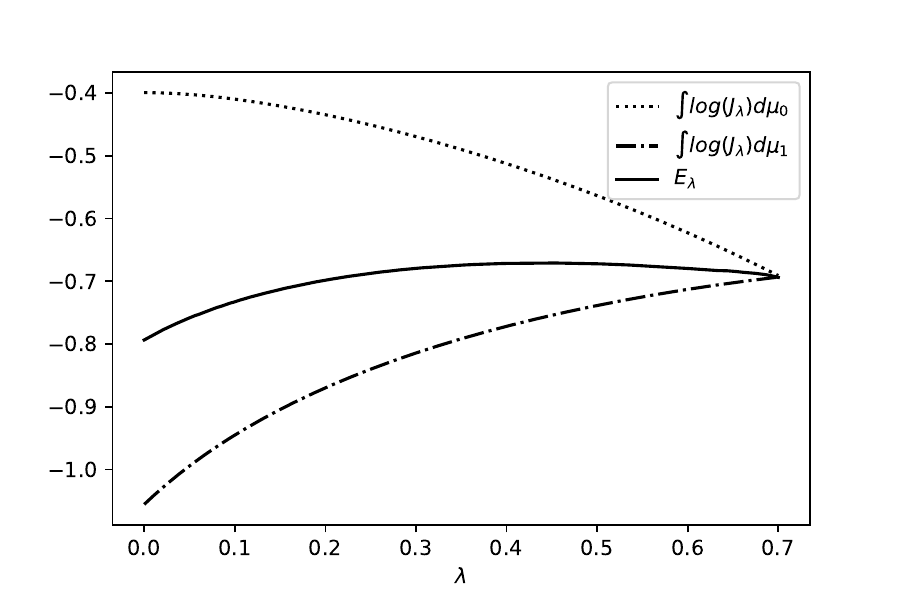}}
\caption{Graphs of the functions $\lambda \to \int \log J_\lambda d \mu_0$ (in dotted line) and
$\lambda \to \int \log J_\lambda d \mu_1$ (in dashed and dotted line) together with the graph of the
values $E_\lambda$ (in solid line), as a function of $\lambda$, when $0\leq \lambda\leq \lambda_s$. The stochastic matrix $\mathcal{P}_{j}$, for $j=0,1$, is from the example in Section \ref{Example}.}
\label{Figure_6.2}
\end{figure}

From the expression \eqref{zcafe1},  property \eqref{tri200} and the fact that $I_0^{E,\lambda}(0)$ decreases with $E$ (while $I_1^{E,\lambda}(0)$ increases with $E$), for each  fixed $\lambda$, we get the best value $E$ is when $E=E_\lambda$ (see definition above). That is, when
\begin{equation} \label{xcafe}
t^{E_\lambda,\lambda}_0 E_\lambda- P_{0,\lambda}(t^{E_\lambda,\lambda}_0) =  I_0^{E_\lambda,\lambda}(0) = I_1^{E_\lambda, \lambda}(0) =   t^{E_\lambda,\lambda}_1 E_\lambda- P_{1,\lambda}(t^{E_\lambda,\lambda}_1).
\end{equation}

Then, we need to find the value $\tilde{\lambda}$, which maximizes $\lambda \to I_1^{E_\lambda, \lambda}(0)$, simultaneously with \eqref{xcafe} holds, among all $\lambda$, such that  $\lambda \in [0,\lambda_s]$.

Note that when $\lambda=\lambda_s$, we have both  $I_0^{E_\lambda, \lambda}(0) = 0= I_1^{E_\lambda, \lambda}(0)$, which is not a good choice.

The function $\lambda \to I_0^{E_{\lambda, \lambda}}(0) $ is monotonous decreasing on $\lambda\in [0,\lambda_s]$. Therefore, the largest $I_0^{E_{\lambda},\lambda}(0)$ occurs
when $\lambda=0$. This means that we need to take $J_\lambda=J_0$.

The value $ E_0$ belongs to the interval $[\int \log J_0 d \mu_1 ,\int \log J_0 d \mu_0  ]$ and, by definition,  $I_0^{E_0,0}(0) = I_1^{E_0,0}(0)$.

The value $E_0$ is determined (see expression \eqref{caca1}) by the equation
\begin{equation} \label{cross}
P_{0,0}^{\prime }  (t_0^{E_0,0})=E_0= P_{1,0}^{\prime }  (t_1^{E_0,0}).
\end{equation}



From expression \eqref{tri112}, the corresponding value of
$I_0^{E_0,0}(0) $ is given by
\begin{equation}\label{relentropy}
I_0^{E_{0},0}(0) =   t^{E_0,0}_0  E_0- P_{0,0}(t^{E_0,0}_0 )=
- \left [\int \log J_0\, d \mu_{t^{E_0,0}_0 }- \int \log J_{0,0,E_0  }\,d \,\mu_{t^{E_0,0}_0 }\right]>0.
\end{equation}


Expression \eqref{relentropy} is a relative entropy.

Finally, the best choice for the hypotheses test, under thermodynamic formalism sense,
will be when the rejection region is of the form
\begin{equation} \label{yokono134}
\mathcal{R}_{n,0}=\left \{y\in \Omega \,\bigg|\, \frac{1}{n} \sum_{i=0}^{n-1} \log J_{0}(\sigma^i (y))   < u_n\right \},\ \ n \in \mathbb{N},
\end{equation}
with $u_n \to E_0$, and $E_0$ satisfies \eqref{cross}.

In this case,
\begin{equation} \label{xcafe4} \pi_0\, \mu_0
\{x\,|\, S_n^0\leq  u_n\} + \pi_1\, \mu_1 \{x\,|\, S_n^0>  u_n\}\,\,\sim e^{-  \,I_0^{E_{0},0}(0)\,n} \end{equation}
will describe the  best possible rate among $\lambda$ for minimizing the probability of a wrong decision.

\section{An Example} \label{Example}
\renewcommand{\theequation}{\thesection.\arabic{equation}}
\setcounter{equation}{0}

In this section, we present an example for the Min-Max Hypotheses test.
Recall that, given the two by two  line stochastic matrix $\mathcal{P}$, the value of the Jacobian $J$ on the cylinder $\overline{i\,j}$ has the constant value $\frac{\pi_i\,p_{i j}}{\pi_j}= p_{ji}$, where $\pi=(\pi_1,\pi_2)$ is the initial stationary vector for  $\mathcal{P}$.

We consider the case where $\mathcal{P}_0$ and $\mathcal{P}_1$ are described by the following two column stochastic matrices
\[
\mathcal{P}_0:= \begin{pmatrix}
\frac{1}{4} &\frac{1}{2} \\
\frac{3}{4} &\frac{1}{2}
\end{pmatrix}
\hspace{0.3cm}\mbox{ and }\hspace{0.3cm}
\mathcal{P}_1:= \begin{pmatrix}
\frac{2}{3} &\frac{1}{5} \\
\frac{1}{3} &\frac{4}{5}
\end{pmatrix}.
\]

In this case, the best rate is described by \eqref{mmi17} as it
was explained in Section \ref{MinMax}. We shall present now some explicit values for this
example.

Using techniques given in \cite{FLO} one can show that
the maximizing probability (see \cite{BLL})  for $\log J_0 - \log J_1$ is  an orbit of period two. More precisely, $m(\log J_0-\log J_1)= \frac12 \log\left(\frac{45}{8}\right) $ which is a value close to $c^{+} \sim 0.8636$. For $m(\log J_1-\log J_0)$, which is realized by a orbit of period $1$,  we get $c^{-}\sim  0.9808$.

In fact, we are interested in finding the image of the functions $P'_0$ and $P'_1$, that is, in finding $c^{+}$ and $c^{-}$. We will show the domain of the Legendre transform for $P_1$, which is the same as for $P_0$.

Define $K = \log(J_0/J_1)$. It is true that $\lim_{t \rightarrow +\infty}P'_0(t)= \lim_{t \rightarrow +\infty}P'_1(t)$, and  this limit  value (see \cite{BLL}) is given by $m(K)$, as long as exists a function $u$, the so-called a \emph{calibrated subaction}, such that
\begin{equation*}
\max_{y}[ K(yx)+u(yx)] = m(K) + u(x),
\end{equation*}
for all $x\in \Omega$. We claim that this equation is satisfied when
\begin{equation*}
m(K)= \frac{1}{2}\log\left(\frac{45}{8}\right).
\end{equation*}
We refer the reader to the reference \cite {BLL} for the max-plus algebra's properties in Ergodic Optimization. The proof of the claim will be as follows. Define the matrix, with $\varepsilon = -\infty$, by
\[W:= \begin{pmatrix}
K_{11} &\varepsilon & K_{21} &  \varepsilon\\
K_{11} &\varepsilon & K_{21} &  \varepsilon \\
\varepsilon &K_{12}  &  \varepsilon & K_{22} \\
\varepsilon &K_{12}  &  \varepsilon & K_{22}
\end{pmatrix}.
\]
Now,
\[
m(K)= \bigoplus_{n=1}^4 \frac{Tr_{\oplus}(W^{\otimes n})}{n}
\]
is simply the maximum cyclic mean in the directed graph which has transition costs $W_{ij}$ from node $i$ to node $j$. Here, we denote $Tr_{\oplus}$ the max-plus trace and $W^{\otimes n}$ is the $n$-th max-plus power of $W$. It is easy to see that the maximal cyclic mean in such graph is given by the mean $\frac{W_{23}+W_{32}}{2} = \frac{1}{2}\log\left(\frac{45}{8}\right)$. In fact, from this the following matrix
\[u = \begin{pmatrix}
K_{21}+K_{12}-2m(K) &0 \\
 K_{12} -m(K)& K_{12}-m(K)
\end{pmatrix}
\]
is a calibrated sub-action, that is, the calibrated subaction is determined by the matrix $u$. Hence, we conclude that  $\lim_{t \rightarrow +\infty}P'_{i=1,2}(t)= m(K)=\frac{1}{2}\log\left(\frac{45}{8}\right)$.

We can also compute $\lim_{t \rightarrow - \infty}P_{i=1,2}'(t)=-m(-K)$ by following the same procedure but now with the matrix $-W$ (here we only change the sign of the finite terms in the matrix $W$). In this way, we consider
\[
m(-K)=\log\left(\frac{8}{3}\right) \ \ \mbox{ and } \ \   v =  \begin{pmatrix}
0 &0\\
-K_{12}-m(-K)&-K_{12}-m(-K)
\end{pmatrix} ,
\]
which satisfies $\max_{Ty=x}\{-K(y)+v(y)\}=m(-K)+v(x)$. This means that
\begin{equation*}
\lim_{t \rightarrow -\infty} P'_{i=1,2}(t)= -\log\left(\frac{8}{3}\right).
\end{equation*}
This concludes the example.
$\hfill \diamondsuit$

The method described in this section can be adapted to other cases.

\subsection*{Acknowledgments}

H.H. Ferreira was supported by CAPES-Brazil.
A.O. Lopes' and S.R.C. Lopes' research were partially supported by CNPq-Brazil.


\end{document}